
\documentclass{article}
\usepackage{amssymb}
\usepackage{amsmath}
\usepackage[margin=1.10in]{geometry}

\setcounter{MaxMatrixCols}{10}

\newtheorem{theorem}{Theorem}

\newtheorem{definition}[theorem]{Definition}

\newtheorem{lemma}[theorem]{Lemma}

\newtheorem{remark}[theorem]{Remark}

\newenvironment{proof}[1][Proof]{\noindent\textbf{#1.} }{\ \rule{0.5em}{0.5em}}
\input{tcilatex}

\begin{document}

\title{The theory of rough paths via one-forms and the extension of an
argument of Schwartz to rough differential equations}
\author{Terry J. Lyons, Danyu Yang \thanks{%
The authors would like to acknowledge the support of the Oxford-Man
Institute and the support provided by ERC advanced grant ESig (agreement no.
291244).}}
\maketitle

\begin{abstract}
We give an overview of the recent approach to the integration of rough paths
that reduces the problem to classical Young integration \cite%
{young1936inequality}. As an application, we extend an argument of Schwartz 
\cite{schwartz1989convergence} to rough differential equations, and prove
the existence, uniqueness and continuity of the solution, which is
applicable when the driving path takes values in nilpotent Lie group or
Butcher group.
\end{abstract}

\section{Overview}

For each $p\in \left[ 1,\infty \right) $ Banach introduced a metric for
measuring degrees of roughness in paths with values in Banach spaces known
as $p$-variation. The paths of finite $1$-variation are dense in the space
of paths of finite $p$-variation for each $p\geq 1$. Where when $p=1$ the
paths are weakly differentiable almost surely and they engage with the
classical Newtonian calculus for example making sense of line integrals:%
\begin{equation*}
\tint\nolimits_{t\in \left[ 0,T\right] }\tau _{t}\otimes d\sigma _{t}.
\end{equation*}%
Young \cite{young1936inequality} extended the integration so that if $\tau $
has finite $q$-variation and $\sigma $ is continuous\footnote{%
or at least has its jumps in different times to $\tau $} and has finite $p$%
-variation where $p^{-1}+q^{-1}>1$ then 
\begin{equation*}
\tint \tau \otimes d\sigma 
\end{equation*}%
is well defined. In particular, if $\sigma $ is of finite $p$-variation for $%
p<2$ then the integral 
\begin{equation*}
\tint \sigma \otimes d\sigma 
\end{equation*}%
is meaningfully defined. Young's original definition was directed towards
definite integrals. Lyons \cite{lyons1994differential} considered the case
of indefinite integrals and the related context of controlled systems of
differential equations:%
\begin{equation}
dy_{t}=f\left( y_{t}\right) d\sigma _{t},\ y_{0}=a,
\label{first differential equation}
\end{equation}%
established the existence and uniqueness of the solution, and also the
continuity of the solution in the driving signal. Lyons' integral requires
the finite $p$-variation of $\sigma $, the finite $\limfunc{Lip}\left(
\gamma \right) $ norm of $f$, and $p^{-1}+\gamma p^{-1}>1$. The methods rely
strongly on Young's approach, but a careful examination reveals that the
arguments also rely critically on the notion of the Lipschitz function and
on the division lemma for them (Proposition 1.26 \cite{lyons2007differential}%
).

\begin{lemma}[Division Property]
\label{Lemma division property}For Banach spaces $\mathcal{U}$ and $\mathcal{%
W}$, suppose $f:\mathcal{U}\rightarrow \mathcal{W}$ is $\limfunc{Lip}\left(
\gamma \right) $ for some $\gamma >1$. Then there exists $h:\mathcal{U}%
\times \mathcal{U}\rightarrow L\left( \mathcal{U},\mathcal{W}\right) $ which
is $\limfunc{Lip}\left( \gamma -1\right) $ such that%
\begin{equation*}
f\left( x\right) -f\left( y\right) =h\left( x,y\right) \left( x-y\right) ,\
\forall x,y\in \mathcal{U}\text{,}
\end{equation*}%
and for some constant $C$ depending only on $\gamma $ and $\mathcal{U}$,%
\begin{equation*}
\left\Vert h\right\Vert _{\limfunc{Lip}\left( \gamma -1\right) }\leq
C\left\Vert f\right\Vert _{\limfunc{Lip}\left( \gamma \right) }\text{.}
\end{equation*}
\end{lemma}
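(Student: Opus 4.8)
The plan is to take for $h$ the derivative of $f$ averaged along the straight segment from $y$ to $x$, and then to read off the $\mathrm{Lip}(\gamma-1)$ estimate almost mechanically from the Taylor data that defines $\|f\|_{\mathrm{Lip}(\gamma)}$. Write $f^{(0)}=f,f^{(1)},\dots,f^{(k)}$ for that data, $k$ the integer with $\gamma\in(k,k+1]$. The first thing I would record is that, since $\gamma>1$, the map $f$ is genuinely Fr\'echet differentiable with derivative $f^{(1)}$: the order-$\gamma$ remainder estimate for $f$ together with the boundedness of $f^{(2)},\dots,f^{(k)}$ forces $f(x)-f(y)-f^{(1)}(y)(x-y)=o(\|x-y\|)$. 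Moreover $f^{(1)}$, equipped with $f^{(2)},\dots,f^{(k)}$ as its successive derivatives and the induced remainders, is $\mathrm{Lip}(\gamma-1)$ with $\|f^{(1)}\|_{\mathrm{Lip}(\gamma-1)}\le\|f\|_{\mathrm{Lip}(\gamma)}$; this is immediate from the definition, since $\gamma-1\in(k-1,k]$ and every quantity controlling $\|f^{(1)}\|_{\mathrm{Lip}(\gamma-1)}$ already appears in $\|f\|_{\mathrm{Lip}(\gamma)}$.

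Next I would set
\[
h(x,y):=\int_{0}^{1}f^{(1)}\bigl(y+t(x-y)\bigr)\,dt\ \in\ L(\mathcal{U},\mathcal{W}),
\]
so that $h(x,x)=f^{(1)}(x)$. The division identity is then the fundamental theorem of calculus applied to the $C^{1}$ curve $t\mapsto f(y+t(x-y))$, whose derivative is $f^{(1)}(y+t(x-y))(x-y)$ by the previous paragraph; integrating over $[0,1]$ gives $h(x,y)(x-y)=f(x)-f(y)$.

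The substantive step is to verify that $h$, viewed as a map on the Banach space $\mathcal{U}\times\mathcal{U}$, lies in $\mathrm{Lip}(\gamma-1)$ with the claimed norm bound; for $\gamma-1\le1$ this is just a H\"older estimate coming from $f^{(1)}\in\mathrm{Lip}(\gamma-1)$, so the content is the case $\gamma-1>1$. Let $L_{t}:\mathcal{U}\times\mathcal{U}\to\mathcal{U}$ be the linear map $L_t(x,y)=tx+(1-t)y$, which satisfies $\|L_t\|\le1$ for the natural norm on $\mathcal{U}\times\mathcal{U}$, so that $h=\int_{0}^{1}f^{(1)}\circ L_t\,dt$. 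Since $L_t$ is linear, the chain rule is exact at every order and the candidate $j$-th derivative of $h$, for $0\le j\le k-1$, is $\int_{0}^{1}f^{(j+1)}\bigl(L_t(x,y)\bigr)\circ L_{t}^{\otimes j}\,dt$. One then has to: (a) justify differentiation under the integral sign in this Banach-valued setting, which is routine given the global boundedness and continuity of all the integrands and their derivatives in $(x,y,t)$; (b) bound these derivatives in sup norm, which is immediate from $\|f^{(j+1)}\|_{\infty}\le\|f\|_{\mathrm{Lip}(\gamma)}$ and $\|L_t\|\le1$; and (c) bound the Taylor remainders of $h$. For (c) the decisive observation is that, $L_t$ being affine, the order-$(k-1-j)$ remainder of the $j$-th derivative of $h$ about $(x',y')$, evaluated at $(x,y)$, collapses into
\[
\int_{0}^{1}R_{j}\bigl(L_t(x,y),L_t(x',y')\bigr)\circ L_{t}^{\otimes j}\,dt,
\]
where $R_{j}$ denotes the order-$(k-1-j)$ Taylor remainder of $f^{(1)}$ at level $j$, which by the $\mathrm{Lip}(\gamma-1)$ data of $f^{(1)}$ satisfies $\|R_{j}(z,z')\|\le\|f\|_{\mathrm{Lip}(\gamma)}\|z-z'\|^{\gamma-1-j}$. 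Since $\|L_t(x,y)-L_t(x',y')\|=\|L_t\bigl((x,y)-(x',y')\bigr)\|\le\|(x,y)-(x',y')\|$, this integral is bounded by $\|f\|_{\mathrm{Lip}(\gamma)}\|(x,y)-(x',y')\|^{\gamma-1-j}$, and combining this with (b) yields $\|h\|_{\mathrm{Lip}(\gamma-1)}\le C\|f\|_{\mathrm{Lip}(\gamma)}$.

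The part I expect to require the most care is the bookkeeping in (c): one must set up the higher-order chain rule and Taylor expansion of $f^{(1)}\circ L_t$ carefully enough to see that all the cross terms produced by expanding $h^{(j+l)}(x',y')\bigl((x,y)-(x',y')\bigr)^{\otimes l}$ recombine into a single remainder of $f^{(1)}$, and one must keep track of the tensor norms on $\mathcal{U}^{\otimes j}$ and $(\mathcal{U}\times\mathcal{U})^{\otimes j}$ in infinite dimensions — which is precisely where the dependence of $C$ on $\mathcal{U}$ (and, through $k$ and the binomial coefficients, on $\gamma$) enters. The remaining ingredients — differentiability of $f$, the identity $f(x)-f(y)=h(x,y)(x-y)$, and the Leibniz rule — are standard.
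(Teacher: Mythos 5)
The paper does not prove this lemma itself; it quotes it from Proposition 1.26 of \cite{lyons2007differential}. Your argument --- taking $h(x,y)=\int_{0}^{1}f^{(1)}\bigl(y+t(x-y)\bigr)\,dt$, obtaining the division identity from the fundamental theorem of calculus, and deducing the $\mathrm{Lip}(\gamma-1)$ bound from the fact that $(x,y)\mapsto tx+(1-t)y$ is linear of norm at most one, so the Taylor data and remainders of $f^{(1)}$ pull back with unchanged exponents --- is correct and is essentially the standard proof given in that cited source.
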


The bound $p<2$ becomes an essential part of the thinking if one relies on
Young's integral. Both $p$-variation paths and $\limfunc{Lip}\left( \gamma
\right) $ functions form local algebras, and $y$ in $\left( \ref{first
differential equation}\right) $ also has finite $p$-variation. From this it
is clear that the space of integrals of $\sigma $, including all spaces of
solutions to differential equations driven by $\sigma $, is closed under
addition, and the pointwise multiplication is explicitly given by 
\begin{gather*}
\text{for }y_{t}=a+\int_{s\in \left[ 0,t\right] }f\left( y_{s}\right)
d\sigma _{s}\text{ and }\hat{y}_{t}=\hat{a}+\int_{s\in \left[ 0,t\right] }%
\hat{f}\left( \hat{y}_{s}\right) d\sigma _{s}\text{,} \\
y_{t}\hat{y}_{t}=\int_{s\in \left[ 0,t\right] }\left( f\left( y_{s}\right) 
\hat{y}_{s}+y_{s}\hat{f}\left( \hat{y}_{s}\right) \right) d\sigma _{s}+a\hat{%
a}\text{ .}
\end{gather*}%
This remark is implicit in establishing the existence, uniqueness and
continuity theorems since it underpins the operations used in Picard
iteration and other approximation strategies. In fact it is easy to show
that composition of an integral of $\sigma $ with a smooth function is also
an integral of $\sigma $ (the chain rule).

In further work \cite{lyons1998differential}, Lyons extended the integral of
Young to the case $p\geq 2$, showed how the notion of bounded variation
paths naturally admits a generalization to $p$-rough paths for any $p\in %
\left[ 1,\infty \right) $, and established an integral, existence,
uniqueness and continuity theorem for differential equations controlled by
weak geometric $p$-rough paths when $f$ is $\limfunc{Lip}\left( \gamma
\right) $ and $\gamma >p$. Young's tricks, the division lemma and the
algebraic manipulations of Picard iteration were all important ingredients.
The main surprise over the case $p<2$ came from the essential nonlinear
aspects of the metric imposed on bounded variation functions that allowed
the $p$-roughness. The space is quite different to that envisaged by Banach.

In this short note we summarize a new approach to the case $p\geq 2$, which
could be viewed as a proper extension of Lyons' original approach, and is
somewhere between the original arguments which emphasized the rough paths
and the perspective of Gubinelli which emphasized more the space of possible
integrands for a given path that (in his context) are referred to as
controlled rough paths. We explain how a clear perspective about a Lipschitz
function $f$ which allows one to (quite simply) reduce the problem of
defining a rough line integral 
\begin{equation*}
\int_{s\in \left[ 0,t\right] }f\left( \sigma _{s}\right) d\sigma _{s}
\end{equation*}%
to the integral of a slowly varying one-form $t\rightarrow \hat{f}\left(
\sigma _{t}\right) $ against a rapidly varying path $\sigma _{t}$ in a way
that satisfies Young's conditions.

The key understanding comes from repositioning the integral so that $\sigma $
is a path in a nilpotent group\ and $h_{t}=\hat{f}\left( \sigma _{t}\right) $
is a closed one-form on that group that varies more slowly with time than $%
\sigma $. When looked at in the correct way, Young's strategy applies and 
\begin{equation*}
\int_{s\in \left[ 0,t\right] }h_{s}d\sigma _{s}
\end{equation*}%
is well defined. Apart from the clarity this understanding gives, it
captures the linearity of the integral against a path in a convenient way,
and actually leads to the introduction of the integral of any $q$-variation
path with values in the closed one-forms against $\sigma $. It is not
surprising that the class of these integrals is again closed under addition,
pointwise multiplication and composition with smooth functions. What is more
surprising is that it is (by construction) rich enough to include the
original integral 
\begin{equation*}
\int_{s\in \left[ 0,t\right] }f\left( \sigma _{s}\right) d\sigma _{s}.
\end{equation*}%
As a result, differential equations against rough paths, etc. are easily
deduced. It is surprising because $s\mapsto f\left( \sigma _{s}\right) $ is
certainly not in general\ of finite $q$-variation for any $q$ satisfying 
\begin{equation*}
\frac{1}{p}+\frac{1}{q}>1\text{,}
\end{equation*}%
if $p\geq 2$.

The key point is actually rooted in geometry that does not have anything
(directly) to do with rough paths but it positions one accurately to do the
analysis of rough paths. We need a number of separate ingredients to explain
clearly the framework.

\subparagraph{Polynomial functions}

A \emph{polynomial function} of degree $n$ is a globally defined function
whose $\left( n+1\right) $th derivative exists and is identically zero. We
intentionally avoid the definition as a power series around a point, and we
could choose different reference points and have different representations
of the \emph{same} polynomial. More specifically, for Banach spaces $%
\mathcal{V}$ and $\mathcal{U}$, we say $p:\mathcal{V}\rightarrow \mathcal{U}$
is a polynomial function of degree (at most) $n$ if $D^{n+1}p\equiv 0$. For
any $y\in \mathcal{V}$, we can represent $p$ as a power series around $y$: 
\begin{equation*}
p\left( x\right) =\sum_{k=0}^{n}\left( D^{k}p\right) \left( y\right) \frac{%
\left( x-y\right) ^{\otimes k}}{k!}\text{, }\forall x\in \mathcal{V}\text{, }%
\forall y\in \mathcal{V}\text{,}
\end{equation*}%
but the value of $p$ does not vary with $y$. We would like to emphasize that 
$p$ is a function defined on the affine space $\mathcal{V}$, it has no
natural graded algebraic structure, there is no particular choice of base
point associated with it, and there does not exist a translation invariant
norm on the space of polynomial functions.

Just as in linear algebra, where one keeps the concept of linear map
separated from the matrix one gets after fixing a particular choice of
basis, it is conceptually essential that we distinguish the polynomial
function as an object from any representation of it via its Taylor series
around a chosen point.

For Banach space $\mathcal{U}$ and integer $n\geq 0$, let $P^{\left(
n\right) }\left( \mathcal{U}\right) $ denote the space of polynomial
functions of degree $n$ taking values in $\mathcal{U}$.

\subparagraph{Lipschitz functions}

By using the polynomial functions (rather than power series), we can shift
the classical viewpoint of the Lipschitz function as a function taking
values in power series to a function taking values in polynomial functions.
This modification gives rise naturally to a way to compare the
representations of polynomial functions, and reduces a Lipschitz function to
a "slowly-varying" polynomial function. The first author would like to thank
Youness Boutaib for sharing his understanding of Lipschitz functions with
him.

\begin{definition}[Stein]
Let $\mathcal{V}$ and $\mathcal{U}$ be two Banach spaces. For $\gamma >0$,
denote $n:=\lfloor \gamma \rfloor $ (the largest integer which is strictly
less than $\gamma $). For a closed set $\mathcal{K}$ in $\mathcal{V}$, we
say $f\ $is a\emph{\ Lipschitz function of degree }$\gamma $ on $\mathcal{K}$%
, if 
\begin{equation*}
f:\mathcal{K}\rightarrow P^{\left( n\right) }\left( \mathcal{U}\right) \text{%
,}
\end{equation*}%
and for some constant $M>0$,%
\begin{equation*}
\sup_{x\in \mathcal{K}}\left\Vert f\left( x\right) _{x}\right\Vert _{\infty
}+\sup_{x,y\in \mathcal{K}}\max_{j=0,1,\dots ,n}\left\Vert \frac{\left(
D^{j}\left( f\left( x\right) -f\left( y\right) \right) \right) _{x}}{%
\left\Vert x-y\right\Vert ^{\gamma -j}}\right\Vert _{\infty }\leq M.
\end{equation*}
\end{definition}

Some explanatory points are in order:

\begin{enumerate}
\item For $x\in \mathcal{K}$, $f\left( x\right) $ is a polynomial function
of degree $n$, and we denote by $f\left( x\right) _{x}$ the degree-$n$
Taylor series of $f\left( x\right) $ around $x$. Similarly, for $j=0,1,\dots
,n$, $\left( D^{j}\left( f\left( x\right) -f\left( y\right) \right) \right) $
is a polynomial function of degree $n-j$ and $\left( D^{j}\left( f\left(
x\right) -f\left( y\right) \right) \right) _{x}$ denotes its degree-$\left(
n-j\right) $ Taylor series around $x$.

\item For each $x\in \mathcal{K}$, $f\left( y\right) \mapsto \left\Vert
f\left( y\right) _{x}\right\Vert _{\infty }$ is a norm on $P^{\left(
n\right) }\left( \mathcal{U}\right) $. These norms are equivalent, and if $%
\mathcal{K}$ is compact then they are uniformly equivalent.

\item The $\limfunc{Lip}\left( \gamma \right) $ norm $\left\Vert
f\right\Vert _{\limfunc{Lip}\left( \gamma \right) }$ is defined to be the
smallest $M$ satisfying the inequality.

\item Suppose $\mathcal{N}$ is a neighborhood of $x$ and $\mathcal{N}%
\subseteq \mathcal{K}$. Then $F:\mathcal{N}\rightarrow \mathcal{U}$ defined
by $y\mapsto \left( f\left( y\right) \right) \left( y\right) $ for $y\in 
\mathcal{N}$ is a $C^{\gamma }$ function ($n$ times differentiable with the $%
n$th derivative $\left( \gamma -n\right) $-H\"{o}lder) and $f\left( x\right) 
$ is the polynomial function that matches $F$ to degree $n$ at $x$ :%
\begin{equation*}
\left( D^{j}\left( f\left( x\right) -F\right) \right) \left( x\right) =0,\
j=0,1,\dots ,n\text{.}
\end{equation*}%
While in comparison with the notion of $C^{\gamma }$ functions, Lipschitz
functions make perfect sense even when $\mathcal{K}$ is of finite
cardinality.

\item The space of Lipschitz functions forms an algebra.

\item Whitney's extension theorem was extended by Stein \cite%
{stein1970singular} to these generalized Lipschitz functions. He proved that
there is a constant $C_{d}$ and a linear extension operator so that any $%
\limfunc{Lip}\left( \gamma \right) $ function $f$ on a closed set $\mathcal{K%
}$ in $\mathbb{R}^{d}$ can be extended to a $\limfunc{Lip}\left( \gamma
\right) $ function $g$ on $\mathbb{R}^{d}$ where $\left\Vert g\right\Vert _{%
\mathrm{\limfunc{Lip}}\left( \gamma \right) }\leq C_{d}\left\Vert
f\right\Vert _{\mathrm{\limfunc{Lip}}\left( \gamma \right) }$.
\end{enumerate}

The \emph{crucial }and somewhat counter-intuitive remark associated with
Lipschitz functions is the following.

\begin{remark}
Suppose $p$ is a polynomial function of degree $m$ and $\gamma >0$ is a real
number. When $\gamma >m$, $p$ is associated with a \emph{constant} $\limfunc{%
Lip}\left( \gamma \right) $ function $f:\mathcal{K}\rightarrow P^{\left(
m\right) }\left( \mathcal{U}\right) $ defined by 
\begin{equation*}
f\left( x\right) :=p\text{, }\forall x\in \mathcal{K}\text{.}
\end{equation*}%
When $\gamma \leq m$, $p$ gives rise to a non-constant $\limfunc{Lip}\left(
\gamma \right) $ function%
\begin{equation*}
f\left( x\right) _{x}\left( z\right) =\sum_{l=0}^{\lfloor \gamma \rfloor
}\left( D^{l}p\right) \left( x\right) \frac{\left( z-x\right) ^{\otimes l}}{%
l!}\text{, }\forall z\in \mathcal{V}\text{, }\forall x\in \mathcal{K}\text{,}
\end{equation*}%
since $\lfloor \gamma \rfloor <m$.
\end{remark}

\begin{remark}
This transformation of polynomials into constant functions in a different
function space, and more generally, smooth functions into slowly changing
functions, can be seen at the heart of the success of the rough path
integral. Rough path integration traditionally integrates a $\limfunc{Lip}%
\left( p+\varepsilon -1\right) $ one-form against a (weak geometric) $p$%
-rough path.
\end{remark}

\subparagraph{Lifting of polynomial one-forms to closed one-forms}

For integer $n\geq 1$, the step-$n$ nilpotent Lie group $G^{n}$ has a
natural graded algebraic structure, and accommodates weak geometric $p$%
-rough paths for $p<n+1$. $G^{1}\ $is an abelian group which is isomorphic
to a Banach space, and fits naturally into the chain $G^{0}=\left\{
e\right\} \overset{\pi }{\twoheadleftarrow }G^{1}\overset{\pi }{%
\twoheadleftarrow }\ldots \overset{\pi }{\twoheadleftarrow }G^{n}\overset{%
\pi }{\twoheadleftarrow }\ldots $ . If $\sigma $ is a path of finite length
taking values in $G^{1}$, then there is a natural lift $\sigma \mapsto \hat{%
\sigma}$ (the signature of $\sigma $), which takes a path in $G^{1}$ into a
horizontal path in $G^{n}$.

We have defined polynomial functions and Lipschitz functions. A \emph{%
polynomial one-form} or a \emph{Lipschitz one-form} is a polynomial function
or Lipschitz function taking values in one-forms.

Suppose $p$ is a polynomial one-form on $G^{1}$, and we would like to lift $%
p $ to a one-form $p^{\ast }$ on $G^{n}$ so that%
\begin{equation*}
\int p\left( \sigma \right) d\sigma =\int p^{\ast }\left( \hat{\sigma}%
\right) d\hat{\sigma}\text{.}
\end{equation*}%
A simple choice is to let $p^{\ast }$ be the pullback of $p$ through the
projection $\pi $. Then the equality holds because $\sigma =\pi \hat{\sigma}$
and has nothing to do with the fact that $\hat{\sigma}$ is the "horizontal
lift" of $\sigma $. Actually, being a "horizontal lift" adds an extra
ingredient which we will exploit in a crucial way. If $\omega $ is any
one-form on $G^{n}$ which has the horizontal directions in its kernel, then 
\begin{equation*}
\int p^{\ast }\left( \hat{\sigma}\right) d\hat{\sigma}=\int \left( p^{\ast
}+\omega \right) \left( \hat{\sigma}\right) d\hat{\sigma}.
\end{equation*}%
The key point is that we can select $\omega $ such that $p^{\ast }+\omega $
is a closed one-form, and the selection only depends on $p$ and not on $\hat{%
\sigma}$.

\begin{theorem}
\label{Theorem polynomial one-form as closed one-form on group}For $n\geq 1$
and a polynomial one-form $p$ of degree $n-1$, there exists a unique
one-form $\omega $ on $G^{n}$, which is orthogonal to the horizontal
directions and $p^{\ast }+\omega $ is a closed one-form on $G^{n}$.\ 
\end{theorem}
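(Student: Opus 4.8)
The plan is to trade the closedness requirement for the existence of a potential, and then to build that potential by integrating $p^{\ast}$ along horizontal paths; the reason this works is that a polynomial one-form of degree $\le n-1$, integrated against $\sigma$, produces a quantity depending only on the step-$n$ signature of $\sigma$. Concretely, first I would fix the grading $\mathfrak{g}^{n}=\mathfrak{g}^{(1)}\oplus\cdots\oplus\mathfrak{g}^{(n)}$ of the Lie algebra of $G^{n}$, with $\mathfrak{g}^{(1)}\cong\mathcal{V}$ (the underlying Banach space, $G^{1}\cong\mathcal{V}$) the horizontal layer, and let $X_{1},X_{2},\dots$ be the left-invariant frame of the horizontal distribution attached to a basis of $\mathcal{V}$. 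Since $d\pi$ is the identity on $\mathfrak{g}^{(1)}$ and annihilates $\mathfrak{g}^{(2)}\oplus\cdots\oplus\mathfrak{g}^{(n)}$, we have $p^{\ast}(X_{i})=p_{i}\circ\pi$ with $p_{i}(x):=\langle p(x),e_{i}\rangle$, while $p^{\ast}$ already vanishes on every non-horizontal left-invariant field. Using that $G^{n}$ is contractible (so closed one-forms are exact), a one-form $\omega$ orthogonal to the horizontal directions makes $p^{\ast}+\omega$ closed exactly when $p^{\ast}+\omega=dF$ for some $F$ on $G^{n}$, and such an $F$ must satisfy $X_{i}F=(p^{\ast}+\omega)(X_{i})=p_{i}\circ\pi$; conversely any $F$ with $X_{i}F=p_{i}\circ\pi$ yields $\omega:=dF-p^{\ast}$, which is orthogonal to the horizontal directions because $\omega(X_{i})=X_{i}F-p_{i}\circ\pi=0$. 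So the whole statement reduces to producing a function $F$ on $G^{n}$, unique up to an additive constant, with $X_{i}F=p_{i}\circ\pi$; uniqueness up to a constant is then automatic, since the horizontal distribution is bracket-generating and $G^{n}$ is connected, so a function with vanishing horizontal derivatives is constant.

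To get $F$ I would integrate $p^{\ast}$ along horizontal paths, setting $F(g):=\int_{0}^{1}p(\sigma_{s})\,d\sigma_{s}$ for any finite-length path $\sigma$ in $\mathcal{V}$ with $\sigma_{0}=0$ whose step-$n$ signature is $g$ (such $\sigma$ exists by Chow's theorem). The crux — and the place where the hypothesis $\deg p\le n-1$ is essential — is to show that for a polynomial one-form $p$ of degree $\le n-1$ and $\sigma_{0}=0$, the number $\int_{0}^{1}p(\sigma_{s})\,d\sigma_{s}$ is a fixed continuous linear functional of the truncated signature $S_{n}(\sigma)$ alone. For this I would split $p=\sum_{k=0}^{n-1}p_{k}$ into homogeneous pieces, write $p_{k}(x)(v)=\phi_{k}(x^{\otimes k}\otimes v)$ for a functional $\phi_{k}$ on $\mathcal{V}^{\otimes(k+1)}$, so $\int_{0}^{1}p_{k}(\sigma_{s})\,d\sigma_{s}=\phi_{k}\bigl(\int_{0}^{1}\sigma_{s}^{\otimes k}\otimes d\sigma_{s}\bigr)$, and then observe that, since $\sigma_{0}=0$ gives $\sigma_{s}^{\otimes k}=(\int_{0}^{s}d\sigma)^{\otimes k}$, splitting the cube $[0,s]^{k}$ according to the order of the $k$ inner times rewrites $\int_{0}^{1}\sigma_{s}^{\otimes k}\otimes d\sigma_{s}$ as a fixed sum of coordinate-permutations of the level-$(k+1)$ component of the signature, which lies inside $S_{n}(\sigma)$ because $k+1\le n$. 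This makes $F$ well defined; moreover $F$ is then polynomial in the coordinates of $G^{n}$, hence smooth, and differentiating $F$ along $s\mapsto g\exp(sX_{i})$ appended to a horizontal path reaching $g$ gives $X_{i}F=p_{i}\circ\pi$. Setting $\omega:=dF-p^{\ast}$ completes existence, and the reduction above gives uniqueness. (If one prefers to avoid contractibility in the uniqueness step, note that the difference $\theta$ of two admissible $\omega$'s is closed and vanishes on the horizontal distribution, so $d\theta(X,Y)=-\theta([X,Y])$ for left-invariant horizontal $X,Y$ forces $\theta$ to vanish on $[\mathfrak{g}^{(1)},\mathfrak{g}^{(1)}]$, and iterating with $\mathfrak{g}^{(k+1)}=[\mathfrak{g}^{(1)},\mathfrak{g}^{(k)}]$ gives $\theta\equiv0$.)

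I expect the main obstacle to be precisely this well-definedness lemma: carrying out honestly the shuffle-type reordering that expresses $\int_{0}^{1}\sigma_{s}^{\otimes k}\otimes d\sigma_{s}$ through the truncated signature, keeping track of the fact that $p$ depends on positions and not merely on increments (which is why the normalisation $\sigma_{0}=0$ has to be imposed and tracked), and, over a general Banach space $\mathcal{V}$, attending to the topological-tensor-product and continuity bookkeeping for the functionals $\phi_{k}$. The remaining ingredients — the contractibility/bracket-generating reduction of the first paragraph and the final differentiations — are routine.
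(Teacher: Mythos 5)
Your argument is correct and is essentially a fully worked-out version of the proof the paper only sketches in one sentence ("give one possible choice of $\omega$; any two choices must coincide"): you realise the explicit choice by expressing $\int p(\sigma)\,d\sigma$ through permuted level-$(k+1)$ signature components (possible precisely because $\deg p\leq n-1$), and your uniqueness argument via the bracket-generating horizontal distribution is the rigorous form of the paper's remark that the closed lift is determined by its integrals along horizontal paths. No gaps.
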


The proof of this theorem is actually not hard: we can give one possible
choice of $\omega $, and since $p^{\ast }+\omega $ does not depend on $\hat{%
\sigma}$, any two choices must coincide.

While we should specify what we mean by a closed one-form on a group.
Roughly speaking, closed one-forms are characterized by zero integral along
closed curves, and a one-form on a connected domain is closed if it can be
integrated against any continuous path on the domain, and the value of the
integral only depends on the end points of the path. A one-form is closed is
equivalent to the exact equality between the one-step and two-steps
estimates. Integrals often correspond to closed one-forms because of the
property $\int_{\left[ s.t\right] }=\int_{\left[ s,u\right] }+\int_{\left[
u,t\right] }$, and this property is actually behind the fact that the lifted
polynomial one-form is closed. In term of mathematical expression, we say $%
\beta $ on group $\mathcal{G}$ taking values in another algebra is closed
(or cocyclic), if 
\begin{equation*}
\beta \left( a,b\right) \beta \left( ab,c\right) =\beta \left( a,bc\right) 
\text{, }\forall a,b,c\in \mathcal{G}\text{.}
\end{equation*}

By lifting a path to a horizontal path and a polynomial one-form to a closed
one-form on the nilpotent Lie group, we replace a general integral by the
integral of a closed one-form. The integral of a closed one-form has the
nice property that it does not depend on the fine structure of the path but
only on its end points. In particular, the integral makes sense for any
continuous path and has no (further) regularity assumption.

\subparagraph{Integrating slowly-varying closed one-forms}

Since the integral of a closed one-form against any continuous path is
well-defined, we could weaken the requirement on the one-form and strengthen
the regularity assumption on the path in such a way that the integral still
makes sense. For example, in the case of classical integral, we can
integrate a constant one-form against any continuous path because constant
one-forms are closed. Then if we weaken the requirement on the one-form and
strengthen the requirement on the path in such a way that their regularities
"compensate" each other, then the integral still makes sense as Young
integral \cite{young1936inequality}. In the case of Young integral, we
actually vary the constant one-form with time and get a path taking values
in constant one-forms, which is more clearly seen in the proof of the
existence of the integral where we keep comparing the constant one-forms
from different times based on their effect on the future increment of the
driving path.

Constant one-form on Banach space is just a special example of closed
one-forms. More generally, suppose we have a family of closed one-forms on a
differential manifold or on a topological group. For a given path taking
values in the manifold or group, if the closed one-form varies with time in
such a way that the one-form and the path have compensated Young
regularities, then the integral should still makes sense.

As we mentioned above, a Lipschitz one-form could be viewed as a
slowly-varying polynomial one-form, and that there exists a canonical lift
of a polynomial one-form to a closed one-form on the nilpotent Lie group.
Hence we can lift a Lipschitz one-form to a slowly-varying closed one-form
on the nilpotent Lie group. More specifically, suppose $\alpha $ is a
Lipschitz one-form on $G^{1}$. Then based on our argument above, $\alpha $
can be viewed as a slowly-varying polynomial one-form. Suppose $\sigma $ is
an underlying reference path. Then the evolution of $\sigma $ gives a
natural order (or say time), and $\alpha $ along $\sigma $ is a
"slowly-time-varying" polynomial one-form with each $\alpha _{\sigma _{t}}$
a polynomial one-form. If we denote by $\hat{\sigma}_{t}\in G^{n}$ the
horizontal lift of the path $\sigma _{t}\in G^{1}$ and denote by $\beta _{%
\hat{\sigma}_{t}}$ the closed one-form lift of the polynomial one-form $%
\alpha _{\sigma _{t}}$, then we can rewrite the integral of a Lipschitz
one-form against $\sigma $ as the integral of a time-varying closed one-form
against $\hat{\sigma}$ : 
\begin{equation*}
\int \alpha \left( \sigma _{t}\right) d\sigma _{t}=\int \alpha _{\sigma
_{t}}\left( \sigma _{t}\right) d\sigma _{t}=\int \beta _{\hat{\sigma}%
_{t}}\left( \hat{\sigma}_{t}\right) d\hat{\sigma}_{t}.
\end{equation*}

When $\sigma $ is of finite length, this algebraic/geometrical reformulation
seems unnecessary. The point is that for general path $\hat{\sigma}$ of
finite $p$-variation taking values in $G^{\left[ p\right] }$, the integral $%
\int \beta _{\hat{\sigma}}\left( \hat{\sigma}\right) d\hat{\sigma}$ still
makes sense (the rough integral) while the classical Riemann sum integral $%
\int \alpha \left( \sigma \right) d\sigma $ does not have a proper meaning.

\begin{theorem}
Suppose $\alpha $ is a $\limfunc{Lip}\left( p+\epsilon -1\right) $ one-form
for some $\epsilon >0$.\ Then there exists $\beta $ taking values in closed
(or say cocyclic) one-forms on $G^{\left[ p\right] }$, such that for any $%
\sigma _{t}\in G^{1}$ of finite length with horizontal lift $\hat{\sigma}%
_{t}\in G^{\left[ p\right] }$, we have%
\begin{equation*}
\int \alpha \left( \sigma _{t}\right) d\sigma _{t}=\int \beta _{\hat{\sigma}%
_{t}}\left( \hat{\sigma}_{t}\right) d\hat{\sigma}_{t}\text{,}
\end{equation*}%
Moreover, the integral $\int \beta _{\hat{\sigma}_{t}}\left( \hat{\sigma}%
_{t}\right) d\hat{\sigma}_{t}$ is well-defined for any continuous path $\hat{%
\sigma}$ of finite $p$-variation taking values in $G^{\left[ p\right] }$ and
the integral is continuous with respect to $\hat{\sigma}$ in $p$-variation
metric.
\end{theorem}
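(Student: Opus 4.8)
Write $\gamma =p+\epsilon -1$; shrinking $\epsilon $ we may assume $\lfloor \gamma \rfloor =[p]-1$, and set $n:=[p]$, so the $\limfunc{Lip}\left( \gamma \right) $ one-form $\alpha $ takes values in polynomial one-forms of degree $n-1$. The plan is to build $\beta $ pointwise: for $g\in G^{n}$ let
\begin{equation*}
\beta _{g}:=\alpha _{\pi g}^{\ast }+\omega \left( \alpha _{\pi g}\right)
\end{equation*}
be the closed one-form on $G^{n}$ that Theorem \ref{Theorem polynomial one-form as closed one-form on group} attaches to the polynomial one-form $\alpha _{\pi g}$, where $\pi :G^{n}\twoheadrightarrow G^{1}$ is the projection, $\alpha _{\pi g}^{\ast }$ the pullback through $\pi $, and $\omega \left( \alpha _{\pi g}\right) $ the unique one-form orthogonal to the horizontal directions making the sum closed. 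Since the equations defining $\omega \left( p\right) $ are linear in $p$, the coefficients of $\beta _{g}$ are fixed linear combinations of the Taylor coefficients $\left( D^{j}\alpha \left( \pi g\right) \right) \left( \pi g\right) $, $j=0,\dots ,n-1$, hence bounded by $\left\Vert \alpha \right\Vert _{\limfunc{Lip}\left( \gamma \right) }$, and $g\mapsto \beta _{g}$ inherits $\limfunc{Lip}\left( \gamma \right) $-type regularity from $\alpha $.

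Next I would establish the quantitative estimates for $\sigma $ of finite length with horizontal lift $\hat{\sigma}$. As $\omega \left( \alpha _{\sigma _{s}}\right) $ annihilates horizontal tangent vectors and $\alpha _{\sigma _{s}}^{\ast }$ is a pullback, integrating $\beta _{\hat{\sigma}_{s}}$ over the horizontal segment $\hat{\sigma}|_{\left[ s,t\right] }$ gives $\int_{s}^{t}\alpha _{\sigma _{s}}\left( \sigma _{r}\right) d\sigma _{r}$, and by closedness this equals $\beta _{\hat{\sigma}_{s}}\left( \hat{\sigma}_{s},\hat{\sigma}_{t}\right) =:A_{s,t}$. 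Cocyclicity of $\beta _{\hat{\sigma}_{s}}$ then yields $A_{s,u}+A_{u,t}-A_{s,t}=\left( \beta _{\hat{\sigma}_{u}}-\beta _{\hat{\sigma}_{s}}\right) \left( \hat{\sigma}_{u},\hat{\sigma}_{t}\right) =\int_{u}^{t}\bigl( \alpha \left( \sigma _{u}\right) -\alpha \left( \sigma _{s}\right) \bigr) \left( \sigma _{r}\right) d\sigma _{r}$, which I would estimate \emph{level by level}: expanding the degree-$\left( n-1\right) $ polynomial one-form $\alpha \left( \sigma _{u}\right) -\alpha \left( \sigma _{s}\right) $ in Taylor series around $\sigma _{u}$, its $j$-th coefficient is---by the $\limfunc{Lip}\left( \gamma \right) $ bound and a harmless change of base point---$O\bigl( \left\Vert \sigma _{s}-\sigma _{u}\right\Vert ^{\gamma -j}\bigr) $, while integrating $\left( \sigma _{r}-\sigma _{u}\right) ^{\otimes j}\otimes d\sigma _{r}$ over $\left[ u,t\right] $ produces the level-$\left( j+1\right) $ part of the increment $\hat{\sigma}_{u,t}$, of size $O\bigl( \omega \left( u,t\right) ^{\left( j+1\right) /p}\bigr) $ for any control $\omega $ dominating the $p$-variation of $\hat{\sigma}$. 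Since $\left\Vert \sigma _{s}-\sigma _{u}\right\Vert \leq \omega \left( s,u\right) ^{1/p}$ and $\left( \gamma -j\right) /p+\left( j+1\right) /p=\left( p+\epsilon \right) /p$ for \emph{every} $j$, superadditivity of $\omega $ gives
\begin{equation*}
\left\vert A_{s,u}+A_{u,t}-A_{s,t}\right\vert \leq C\left\Vert \alpha \right\Vert _{\limfunc{Lip}\left( \gamma \right) }\,\omega \left( s,t\right) ^{\left( p+\epsilon \right) /p},
\end{equation*}
and the same bookkeeping gives $\left\vert A_{s,t}\right\vert \leq C\left\Vert \alpha \right\Vert _{\limfunc{Lip}\left( \gamma \right) }\,\omega \left( s,t\right) ^{1/p}$.

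As $\left( p+\epsilon \right) /p>1$, the classical Young/sewing lemma \cite{young1936inequality} now produces a unique additive $\left( s,t\right) \mapsto \mathcal{I}_{s,t}$ with $\left\vert \mathcal{I}_{s,t}-A_{s,t}\right\vert \leq C^{\prime }\omega \left( s,t\right) ^{\left( p+\epsilon \right) /p}$, obtained as the limit of $\sum_{i}A_{t_{i},t_{i+1}}$ along partitions of vanishing mesh; I would \emph{define} $\int \beta _{\hat{\sigma}}\left( \hat{\sigma}\right) d\hat{\sigma}:=\mathcal{I}$. To recover $\int \alpha \left( \sigma \right) d\sigma $ for finite-length $\sigma $, note $\sum_{i}A_{t_{i},t_{i+1}}=\sum_{i}\int_{t_{i}}^{t_{i+1}}\alpha _{\sigma _{t_{i}}}\left( \sigma _{r}\right) d\sigma _{r}$, and
\begin{equation*}
\Bigl\vert \sum_{i}A_{t_{i},t_{i+1}}-\int_{0}^{T}\alpha _{\sigma _{r}}\left( \sigma _{r}\right) d\sigma _{r}\Bigr\vert \leq C\left\Vert \alpha \right\Vert _{\limfunc{Lip}\left( \gamma \right) }\Bigl( \sup_{i}\sup_{r\in \left[ t_{i},t_{i+1}\right] }\left\Vert \sigma _{t_{i}}-\sigma _{r}\right\Vert \Bigr) ^{\gamma }\mathrm{length}\left( \sigma \right) ,
\end{equation*}
which tends to $0$ with the mesh, because the value at $\sigma _{r}$ of the polynomial one-form $\alpha \left( \sigma _{t_{i}}\right) -\alpha \left( \sigma _{r}\right) $ is $O\bigl( \left\Vert \sigma _{t_{i}}-\sigma _{r}\right\Vert ^{\gamma }\bigr) $. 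This is the crucial \emph{slowly-varying} point: the relevant quantity is not $\left\Vert F\left( \sigma _{t_{i}}\right) -F\left( \sigma _{r}\right) \right\Vert =O\bigl( \left\Vert \sigma _{t_{i}}-\sigma _{r}\right\Vert \bigr) $, with $F:x\mapsto \alpha \left( x\right) \left( x\right) $, but the far smaller gap between the degree-$\left( n-1\right) $ Taylor polynomial of $F$ at $\sigma _{t_{i}}$ and the true value at $\sigma _{r}$. Hence $\mathcal{I}_{0,t}=\int_{0}^{t}\alpha \left( \sigma _{r}\right) d\sigma _{r}$. Finally, every continuous $\hat{\sigma}$ of finite $p$-variation in $G^{n}$ is the $p^{\prime }$-variation limit ($p^{\prime }>p$ close to $p$) of horizontal lifts of finite-length paths, and all the above bounds depend on $\hat{\sigma}$ only through a $p$-variation control $\omega $ and on $\alpha $ only through $\left\Vert \alpha \right\Vert _{\limfunc{Lip}\left( \gamma \right) }$; running the sewing argument with \emph{differences} of the quantities $A_{s,t}$---which depend locally Lipschitz-continuously on $\hat{\sigma}$, using the bounded coefficients of $\beta _{g}$ and the $\limfunc{Lip}\left( \gamma \right) $ regularity of $g\mapsto \beta _{g}$---simultaneously extends $\int \beta _{\hat{\sigma}}\left( \hat{\sigma}\right) d\hat{\sigma}$ to all such $\hat{\sigma}$ and yields its continuity in $p$-variation.

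The step I expect to be the real obstacle is the level-by-level estimate of the defect: one must unwind the lift of Theorem \ref{Theorem polynomial one-form as closed one-form on group} precisely enough to see that in each degree-$\left( j+1\right) $ slot the slow variation of the $j$-th Taylor coefficient of $\alpha $ (exponent $\left( \gamma -j\right) /p$) is matched \emph{exactly} against the level-$\left( j+1\right) $ $p$-variation of the path (exponent $\left( j+1\right) /p$), so that Young's condition $\left( p+\epsilon \right) /p>1$ is recovered in all slots at once. This is precisely where passing to the nilpotent group $G^{n}$ and adding the horizontal-orthogonal correction pay off: they convert the genuinely-too-rough integrand $s\mapsto F\left( \sigma _{s}\right) $ into a graded family of closed one-forms whose slow variation is visible one grade at a time.
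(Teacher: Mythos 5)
Your outline is correct and follows essentially the same approach as the paper: lift the Lipschitz one-form to a slowly-varying closed (cocyclic) one-form on $G^{\left[ p\right] }$ via Theorem \ref{Theorem polynomial one-form as closed one-form on group}, bound the cocycle defect $A_{s,u}+A_{u,t}-A_{s,t}$ grade by grade with exponents $\left( \gamma -j\right) /p+\left( j+1\right) /p=\left( p+\epsilon \right) /p>1$, and conclude by Young's sewing argument. This is exactly the compensated-regularity mechanism the paper encodes in the operator norm $\left\Vert \beta \right\Vert _{\gamma }$ and in the limit of products $\beta _{t_{0}}\left( g_{t_{0}},g_{t_{0},t_{1}}\right) \cdots $ defining the integral, with the detailed estimates deferred to \cite{lyons2014integration}.
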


\subparagraph{Conclusion}

Based on our formulation, to make sense of the rough integral, all we need
is the compensated Young regularity between two dual paths: one takes values
in the group and the other takes values in the closed (cocyclic) one-forms
on the group. By viewing the Lipschitz functions as slowly-varying
polynomial functions and by lifting the polynomial one-forms to closed
one-forms, we encapsulate the nonlinearity of the integral to the structure
of the group and to the closed one-forms on the group so that\ the idea
behind the generalized integral is clearer and bears a similar form to the
linear Young integral.

\section{Definitions and Properties}

Suppose $\mathcal{U}$, $\mathcal{V}$ and $\mathcal{W}$ are Banach spaces and 
$p\geq 1$ a real number. We restate the definition of the cocyclic one-form
and the dominated path as in \cite{lyons2014integration}.

Suppose $\mathcal{A}$ and $\mathcal{B}$ are Banach algebras and $\mathcal{G}$
is a topological group in $\mathcal{A}$. We denote by $L\left( \mathcal{A},%
\mathcal{B}\right) $ the set of continuous linear mappings from $\mathcal{A}$
to $\mathcal{B}$, and we denote by $C\left( \mathcal{G},L\left( \mathcal{A},%
\mathcal{B}\right) \right) $ the set of continuous mappings from $\mathcal{G}
$ to $L\left( \mathcal{A},\mathcal{B}\right) $.

\begin{definition}[Cocyclic One-Form]
We say $\beta \in C\left( \mathcal{G},L\left( \mathcal{A},\mathcal{B}\right)
\right) $ is a cocyclic one-form, if there exists a topological group $%
\mathcal{H}$ in $\mathcal{B}$ such that $\beta \left( a,b\right) \in 
\mathcal{H}$ for all $a,b\in \mathcal{G}$ and%
\begin{equation*}
\beta \left( a,b\right) \beta \left( ab,c\right) =\beta \left( a,bc\right) 
\text{, }\forall a,b,c\in \mathcal{G}\text{.}
\end{equation*}%
We denote the set of cocyclic one-forms by $B\left( \mathcal{G},\mathcal{H}%
\right) $ (or $B\left( \mathcal{G}\right) $).
\end{definition}

Since a Banach space $\mathcal{U}$ is canonically embedded in the Banach
algebra $\left\{ \left( c,u\right) |c\in 
\mathbb{R}
,u\in \mathcal{U}\right\} $ with multiplication $\left( c,u\right) \left(
r,v\right) =\left( cr,ru+cv\right) $, we denote by $B\left( \mathcal{G},%
\mathcal{U}\right) $ the set of cocyclic one-forms taking values in $%
\mathcal{U}$ satisfying $\beta \left( a,b\right) +\beta \left( ab,c\right)
=\beta \left( a,bc\right) $ for all $a,b,c$ in $\mathcal{G}$.

For $p\geq 1$, we denote by $\left[ p\right] $ the integer part of $p$. As
in \cite{lyons2014integration}, we equip the tensor powers of $\mathcal{V}$
with admissible norms and assume $T^{\left( \left[ p\right] \right) }\left( 
\mathcal{V}\right) =%
\mathbb{R}
\oplus \mathcal{V}\oplus \cdots \oplus \mathcal{V}^{\otimes \left[ p\right]
} $ is a graded Banach algebra equipped with the norm $\left\Vert \cdot
\right\Vert :=\sum_{k=0}^{\left[ p\right] }\left\Vert \pi _{k}\left( \cdot
\right) \right\Vert $ ($\pi _{k}$ denotes the projection to $\mathcal{V}%
^{\otimes k}$), and the multiplication on $T^{\left( \left[ p\right] \right)
}\left( \mathcal{V}\right) $ is induced by a finite family of linear
projective mappings denoted by $\mathcal{P}_{\left[ p\right] }$; $\mathcal{G}%
_{\left[ p\right] }$ is a closed topological group in $T^{\left( \left[ p%
\right] \right) }\left( \mathcal{V}\right) $ whose linear span is $T^{\left( %
\left[ p\right] \right) }\left( \mathcal{V}\right) $ and whose projection to 
$%
\mathbb{R}
$ is $1$.

When $\mathcal{G}_{\left[ p\right] }$ is the nilpotent Lie group over $%
\mathcal{V}$, $\mathcal{P}_{\left[ p\right] }=\left\{ \pi _{k}\right\}
_{k=0}^{\left[ p\right] }$ with $\pi _{k}\left( ab\right) =\sum_{j=0}^{k}\pi
_{j}\left( a\right) \otimes \pi _{k-j}\left( b\right) $ for $k=0,1,\dots ,%
\left[ p\right] $ and for $a,b\in T^{\left( \left[ p\right] \right) }\left( 
\mathcal{V}\right) $. When $\mathcal{G}_{\left[ p\right] }$ is the Butcher
group over $%
\mathbb{R}
^{d}$, $\mathcal{P}_{\left[ p\right] }$ is the set of labelled forests of
degree less or equal to $\left[ p\right] $ and $\sigma \left( ab\right)
=\sum_{c}P^{c}\left( \sigma \right) \left( a\right) R^{c}\left( \sigma
\right) \left( b\right) $ for $\sigma \in \mathcal{P}_{\left[ p\right] }$
and for $a,b\in T^{\left( \left[ p\right] \right) }(%
\mathbb{R}
^{d})$ where the sum is over all admissible cuts of the forest $\sigma $.
For more details see \cite%
{reutenauer2003free,lyons1998differential,butcher1972algebraic,connes1999hopf,gubinelli2010ramification}%
.

We equip $\mathcal{G}_{\left[ p\right] }$ with the norm $\left\vert \cdot
\right\vert :=\sum_{k=1}^{\left[ p\right] }\left\Vert \pi _{k}\left( \cdot
\right) \right\Vert ^{\frac{1}{k}}$ and define the $p$-variation of a
continuous path $g:\left[ 0,T\right] \rightarrow \mathcal{G}_{\left[ p\right]
}$ by%
\begin{equation*}
\left\Vert g\right\Vert _{p-var,\left[ 0,T\right] }:=\sup_{D,D\subset \left[
0,T\right] }\left( \tsum\nolimits_{k,t_{k}\in D}\left\vert
g_{t_{k}}^{-1}g_{t_{k+1}}\right\vert ^{p}\right) ^{\frac{1}{p}}\text{.}
\end{equation*}%
We denote by $C^{p-var}\left( \left[ 0,T\right] ,\mathcal{G}_{\left[ p\right]
}\right) $ the set of continuous paths of finite $p$-variation on $\left[ 0,T%
\right] $ taking values in $\mathcal{G}_{\left[ p\right] }$. (The exact form
of norm on $\mathcal{G}_{\left[ p\right] }$ is not important, and the
integral can be defined as long as the norm on the group and the norm on the
one-form "compensate" each other.)

For $\alpha \in L\left( T^{\left[ p\right] }\left( \mathcal{V}\right) ,%
\mathcal{U}\right) $, we denote 
\begin{equation*}
\left\Vert \alpha \left( \cdot \right) \right\Vert :=\sup_{v\in T^{\left[ p%
\right] }\left( \mathcal{V}\right) ,\left\Vert v\right\Vert =1}\left\Vert
\alpha \left( v\right) \right\Vert \text{,\ }\left\Vert \alpha \left( \cdot
\right) \right\Vert _{k}:=\sup_{v\in \mathcal{V}^{\otimes k},\left\Vert
v\right\Vert =1}\left\Vert \alpha \left( v\right) \right\Vert \text{, }%
k=1,2,\dots ,\left[ p\right] \text{.}
\end{equation*}

We say $\omega :\left\{ \left( s,t\right) |0\leq s\leq t\leq T\right\}
\rightarrow \overline{%
\mathbb{R}
^{+}}$ is a control, if $\omega $ is continuous, non-negative, vanishes on
the diagonal and satisfies $\omega \left( s,u\right) +\omega \left(
u,t\right) \leq \omega \left( s,t\right) $ for $0\leq s\leq u\leq t\leq T$.
As in\ \cite{lyons2014integration}, for $g\in C(\left[ 0,T\right] ,\mathcal{G%
}_{\left[ p\right] })$ and $\beta :\left[ 0,T\right] \rightarrow B(\mathcal{G%
}_{\left[ p\right] },\mathcal{U)}$, if the limit exists%
\begin{equation*}
\lim_{\left\vert D\right\vert \rightarrow 0,D=\left\{ t_{k}\right\}
_{k=0}^{n}\subset \left[ 0,T\right] }\beta _{0}\left(
g_{0},g_{0,t_{1}}\right) \beta _{t_{1}}\left(
g_{t_{1}},g_{t_{1},t_{2}}\right) \cdots \beta _{t_{n-1}}\left(
g_{t_{n-1}},g_{t_{n-1},T}\right) \text{ with }g_{s,t}:=g_{s}^{-1}g_{t}\text{,%
}
\end{equation*}%
then we define the limit to be the integral $\int_{0}^{T}\beta _{u}\left(
g_{u}\right) dg_{u}$.

\begin{definition}[Dominated Path]
For $g\in C^{p-var}\left( \left[ 0,T\right] ,\mathcal{G}_{\left[ p\right]
}\right) $ and Banach space $\mathcal{U}$, we say a continuous path $\rho :%
\left[ 0,T\right] \rightarrow \mathcal{U}$ is dominated by $g$, if there
exists $\beta :\left[ 0,T\right] \rightarrow B\left( \mathcal{G}_{\left[ p%
\right] },\mathcal{U}\right) $ which satisfies, for some $M>0$, control $%
\omega $ and $\theta >1$,%
\begin{gather*}
\left\Vert \beta _{t}\left( g_{t},\cdot \right) \right\Vert \leq M\text{, }%
\forall t\in \left[ 0,T\right] \text{,} \\
\left\Vert \left( \beta _{t}-\beta _{s}\right) \left( g_{t},\cdot \right)
\right\Vert _{k}\leq \omega \left( s,t\right) ^{\theta -\frac{k}{p}}\text{, }%
\forall 0\leq s\leq t\leq T\text{, }k=1,2,\dots ,\left[ p\right] \text{,}
\end{gather*}%
such that $\rho _{t}=\rho _{0}+\int_{0}^{t}\beta _{u}\left( g_{u}\right)
dg_{u}$ for $t\in \left[ 0,T\right] $.
\end{definition}

Based on the definition of dominated paths, we introduce an operator norm on
the space of one-forms to quantify the convergence of one-forms (associated
with Picard iterations).

For $g\in C^{p-var}\left( \left[ 0,T\right] ,\mathcal{G}_{\left[ p\right]
}\right) $ and control $\omega $, we say $g$ is controlled by $\omega $ if $%
\left\Vert g\right\Vert _{p-var,\left[ s,t\right] }^{p}\leq \omega \left(
s,t\right) $ for all $s<t$.

\begin{definition}[Operator Norm]
For $g\in C^{p-var}\left( \left[ 0,T\right] ,\mathcal{G}_{\left[ p\right]
}\right) $ controlled by $\omega $ and $\beta :\left[ 0,T\right] \rightarrow
B\left( \mathcal{G}_{\left[ p\right] },\mathcal{U}\right) $, we define, for $%
\gamma >1$,%
\begin{equation*}
\left\Vert \beta \right\Vert _{\gamma }:=\sup_{t\in \left[ 0,T\right]
}\left\Vert \beta _{t}\left( g_{t},\cdot \right) \right\Vert
+\max_{k=1,\dots ,\lfloor \gamma \rfloor }\sup_{0\leq s\leq t\leq T}\frac{%
\left\Vert \left( \beta _{t}-\beta _{s}\right) \left( g_{t},\cdot \right)
\right\Vert _{k}}{\omega \left( s,t\right) ^{\frac{\gamma -k}{p}}}\text{.}
\end{equation*}
\end{definition}

Suppose $\left\Vert \beta \right\Vert _{\gamma }<\infty $. When $\gamma $
increases, the integrability of $\beta $ increases. In the extreme case that 
$\gamma $ tends to infinity, $\beta $ is compelled to be a constant cocyclic
one-form, so is integrable against any continuous path. If $\gamma >p-1$ and
if there exists $\sigma :\left[ 0,T\right] \rightarrow \mathcal{U}$ such
that $\left\Vert \sigma _{t}-\sigma _{s}-\beta _{s}\left(
g_{s},g_{s,t}\right) \right\Vert \leq C\left\Vert g\right\Vert _{p-var,\left[
s,t\right] }^{\gamma }$ for all $s<t$, then $\sigma $ is a weakly controlled
path introduced by Gubinelli \cite{gubinelli2004controlling}. When $\gamma >p
$, $\beta $ is integrable against $g$ and $t\mapsto \int_{0}^{t}\beta \left(
g_{u}\right) dg_{u}$ is a dominated path.

\begin{definition}
Suppose there exists a mapping $\mathcal{I}^{\prime }\in L(T^{\left( \left[ p%
\right] \right) }\left( \mathcal{V}\right) ,T^{\left( \left[ p\right]
\right) }\left( \mathcal{V}\right) ^{\otimes 2})$ which satisfies%
\begin{equation*}
\mathcal{I}^{\prime }\left( 1\right) =\mathcal{I}^{\prime }\left( \mathcal{V}%
\right) =0\text{, }\mathcal{I}^{\prime }\left( \mathcal{V}^{\otimes
k}\right) \subseteq \mathcal{V}^{\otimes \left( k-1\right) }\otimes \mathcal{%
V}\text{, }k=2,\dots ,\left[ p\right] \text{,}
\end{equation*}%
and (with $1_{n,2}^{\prime }$ denoting the projection of $T^{\left( \left[ p%
\right] \right) }\left( \mathcal{V}\right) ^{\otimes 2}$ to $\sum_{k=1}^{%
\left[ p\right] -1}\mathcal{V}^{\otimes k}\otimes \mathcal{V}$)%
\begin{equation*}
\mathcal{I}^{\prime }\left( ab\right) =\mathcal{I}^{\prime }\left( a\right)
+1_{n,2}^{\prime }\left( \left( a\otimes a\right) \mathcal{I}^{\prime
}\left( b\right) \right) +1_{n,2}^{\prime }\left( \left( a-1\right) \otimes
\left( a\left( b-1\right) \right) \right) \text{, }\forall a,b\in \mathcal{G}%
_{\left[ p\right] }\text{.}
\end{equation*}
\end{definition}

Due to the special form of the dominated paths in Picard iterations, we only
need the mapping $\mathcal{I}^{\prime }$ (instead of $\mathcal{I}$ as in 
\cite{lyons2014integration}) for the recursive integrals to make sense.
Roughly speaking, the mapping $\mathcal{I}$ is used to define the iterated
integral of two dominated (controlled) paths, and corresponds to a universal
continuous linear mapping which has the "formal" expression:%
\begin{equation*}
\mathcal{I}\left( a\right) =\int_{0}^{T}\left( g_{0,u}-1\right) \otimes
\delta g_{0,u}\text{, }g\in C\left( \left[ 0,T\right] ,\mathcal{G}_{\left[ p%
\right] }\right) \text{, }a=g_{0,T}\text{, }\forall a\in \mathcal{G}_{\left[
p\right] }\text{.}
\end{equation*}%
The mapping $\mathcal{I}^{\prime }$ encodes part of the information of $%
\mathcal{I}$, is used to define the integral of a dominated (controlled)
path against the first level of the given group-valued path, and corresponds
to a universal continuous linear mapping with the formal expression: 
\begin{equation*}
\mathcal{I}^{\prime }\left( a\right) =\int_{0}^{T}\left( g_{0,u}-1\right)
\otimes \delta x_{u}\text{, }x:=\pi _{1}\left( g\right) \text{, }g\in
C\left( \left[ 0,T\right] ,\mathcal{G}_{\left[ p\right] }\right) \text{, }%
a=g_{0,T}\text{, }\forall a\in \mathcal{G}_{\left[ p\right] }\text{.}
\end{equation*}

In particular, $\mathcal{I}^{\prime }$ is well-defined for degree-$\left[ p%
\right] $ nilpotent Lie group and degree-$\left[ p\right] $ Butcher group
for any $p\geq 1$ (see \cite{lyons2014integration} for more explanation).

The lemma below proves that one can integrate a weakly controlled path \cite%
{gubinelli2004controlling, gubinelli2010ramification} and get a dominated
path. We made the dependence of the coefficients explicit to suit the
special needs of our proof.

\begin{lemma}
\label{Lemma controlled path is integrable}Suppose $g\in C^{p-var}\left( %
\left[ 0,T\right] ,\mathcal{G}_{\left[ p\right] }\right) $ is controlled by $%
\omega $, $\beta :\left[ 0,T\right] \rightarrow B\left( \mathcal{G}_{\left[ p%
\right] },L\left( \mathcal{V},\mathcal{W}\right) \right) $ satisfies 
\begin{equation*}
\left\Vert \beta \right\Vert _{\gamma }<\infty \text{ for some }\gamma \in
(p-1,\left[ p\right] ]\text{,}
\end{equation*}
and there exists $\varphi :\left[ 0,T\right] \rightarrow L\left( \mathcal{V},%
\mathcal{W}\right) $ which satisfies for some $M>0$, 
\begin{equation}
\left\Vert \varphi _{t}-\varphi _{s}-\beta _{s}\left( g_{s},g_{s,t}\right)
\right\Vert \leq M\left\Vert \beta \right\Vert _{\gamma }\omega \left(
s,t\right) ^{\frac{\gamma }{p}}\text{, }\forall 0\leq s<t\leq T\text{.}
\label{Definition of controlled path}
\end{equation}%
If we define $\eta :\left[ 0,T\right] \rightarrow B\left( \mathcal{G}_{\left[
p\right] },\mathcal{W}\right) $ by%
\begin{equation*}
\eta _{t}\left( a,b\right) :=\varphi _{t}\pi _{1}\left( g_{t}^{-1}a\left(
b-1\right) \right) +\beta _{t}\left( g_{t},\cdot \right) \pi _{1}\left(
\cdot \right) \mathcal{I}^{\prime }\left( g_{t}^{-1}a\left( b-1\right)
\right) \text{, }\forall a,b\in \mathcal{G}_{\left[ p\right] }\text{, }%
\forall t\in \left[ 0,T\right] \text{,}
\end{equation*}%
then for some structural constant $c$ (depending on the mapping $\mathcal{I}%
^{\prime }$), 
\begin{equation*}
\sup_{0\leq t\leq T}\left\Vert \eta _{t}\left( g_{t},\cdot \right)
\right\Vert \leq \sup_{0\leq t\leq T}\left\Vert \varphi _{t}\right\Vert
+c\left\Vert \beta \right\Vert _{\gamma }\text{,}
\end{equation*}%
and there exists a constant $C=C(M,p,\omega \left( 0,T\right) )$ such that 
\begin{equation*}
\left\Vert \left( \eta _{t}-\eta _{s}\right) \left( g_{t},\cdot \right)
\right\Vert _{k}\leq C\left\Vert \beta \right\Vert _{\gamma }\omega \left(
s,t\right) ^{\frac{\gamma +1-k}{p}}\text{, }\forall s<t\text{, }k=1,2,\dots ,%
\left[ p\right] \text{.}
\end{equation*}%
As a consequence, $\left\Vert \eta \right\Vert _{\gamma +1}<\infty $ and $%
t\mapsto \int_{0}^{t}\eta _{u}\left( g_{u}\right) dg_{u}$ is a dominated
path.
\end{lemma}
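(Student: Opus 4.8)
The plan is to establish, in order: (a) that $\eta$ takes values in $\mathcal{W}$-valued cocyclic one-forms on $\mathcal{G}_{[p]}$; (b) the uniform bound $\sup_{t}\|\eta_t(g_t,\cdot)\|\le\sup_{t}\|\varphi_t\|+c\|\beta\|_\gamma$; (c) the increment bound $\|(\eta_t-\eta_s)(g_t,\cdot)\|_k\le C\|\beta\|_\gamma\,\omega(s,t)^{(\gamma+1-k)/p}$; and then to deduce $\|\eta\|_{\gamma+1}<\infty$ from (b)--(c), and integrability and domination from the fact recorded above that a one-form $\zeta$ with $\|\zeta\|_\theta<\infty$ and $\theta>p$ is integrable against $g$ with integral dominated by $g$. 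Throughout I use that a cocyclic one-form, once its base point is fixed, is canonically a linear map on the augmentation ideal $\bigoplus_{k=1}^{[p]}\mathcal{V}^{\otimes k}$, and that the coboundary property gives $\beta_s(g_t,\cdot)=\beta_s(g_s,g_{s,t}\,\cdot)$ as linear maps (because $g_s g_{s,t}=g_t$).

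For (a): since $\varphi_t$, $\pi_1$ and $\mathcal{I}'$ are all linear, checking the additive coboundary identity $\eta_t(a,b)+\eta_t(ab,c)=\eta_t(a,bc)$ reduces to the elementary tensor-algebra identity $g_t^{-1}a(b-1)+g_t^{-1}(ab)(c-1)=g_t^{-1}a(bc-1)$, which is immediate; continuity of $a\mapsto\eta_t(a,\cdot)$ is inherited from the building blocks, and the multiplicativity relation for $\mathcal{I}'$ plays no role here. For (b): put $a=g_t$, so $g_t^{-1}a=1$ and $\eta_t(g_t,b)=\varphi_t\,\pi_1(b-1)+\beta_t(g_t,\cdot)\,\pi_1(\cdot)\,\mathcal{I}'(b-1)$; hence $\|\eta_t(g_t,\cdot)\|\le\|\varphi_t\|+c\|\beta_t(g_t,\cdot)\|$ with $c$ a structural constant bounding $\mathcal{I}'$, and $\sup_{t}\|\beta_t(g_t,\cdot)\|\le\|\beta\|_\gamma$ by definition of the operator norm.

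The substance is (c), which I would prove by splitting according to the tensor level $k$ of the argument, using the coboundary rewriting $\eta_s(g_t,\cdot)=\eta_s(g_s,g_{s,t}\,\cdot)$ together with the multiplicativity relation to expand $\mathcal{I}'(g_{s,t}(b-1))$. On level $1$ the $\mathcal{I}'$-terms disappear ($\mathcal{I}'$ kills $\mathcal{V}$), the surviving $\beta_s$-contributions reconstitute $\beta_s(g_s,g_{s,t})$ up to its top tensor level $\pi_{[p]}(g_{s,t})$, and what remains is $\big((\varphi_t-\varphi_s)-\beta_s(g_s,g_{s,t})\big)\pi_1(\cdot)$ plus the $\pi_{[p]}(g_{s,t})$ leftover; the first piece is $\le M\|\beta\|_\gamma\,\omega(s,t)^{\gamma/p}$ by hypothesis~(\ref{Definition of controlled path}), and the leftover is $\lesssim\|\beta\|_\gamma\,\omega(s,t)^{[p]/p}\le\omega(0,T)^{([p]-\gamma)/p}\|\beta\|_\gamma\,\omega(s,t)^{\gamma/p}$ since $\gamma\le[p]$, which gives the $k=1$ bound. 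On level $k\ge2$ the $\varphi$-terms vanish (there is no $\pi_1(b-1)$), and the left-multiplication-by-$g_{s,t}$ corrections produced by the $\mathcal{I}'$-expansion cancel between the $\beta_t(g_t,\cdot)$- and $\beta_s(g_s,\cdot)$-occurrences, leaving exactly $(\beta_t-\beta_s)(g_t,\cdot)$ composed with $\mathcal{I}'$ on level $k$; since $\mathcal{I}'(\mathcal{V}^{\otimes k})\subseteq\mathcal{V}^{\otimes(k-1)}\otimes\mathcal{V}$ this is bounded by $\|(\beta_t-\beta_s)(g_t,\cdot)\|_{k-1}\le\|\beta\|_\gamma\,\omega(s,t)^{(\gamma-k+1)/p}$, the required exponent. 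I expect the cancellation bookkeeping in the $k\ge2$ case — tracking which tensor factors survive $1_{n,2}^{\prime}$ after the $\mathcal{I}'$-relation is applied, and checking that the surviving $g_{s,t}$-corrections match and cancel — to be the only genuine obstacle; two small facts make it work: $\mathcal{I}'$ acts as the identity on each pure tensor power (under the identification $\mathcal{V}^{\otimes k}=\mathcal{V}^{\otimes(k-1)}\otimes\mathcal{V}$), and $k-1\le\lfloor\gamma\rfloor$ for every $k\le[p]$ because $\gamma>p-1\ge[p]-1$, so $\|\beta\|_\gamma$ genuinely controls the residual.

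Finally, (b) and (c) yield $\|\eta\|_{\gamma+1}\le\sup_{t}\|\varphi_t\|+(c+C)\|\beta\|_\gamma<\infty$, the increment bounds for $k=1,\dots,[p]$ sufficing because $\gamma\le[p]$ forces $\lfloor\gamma+1\rfloor\le[p]$; and since $\gamma>p-1$ gives $\gamma+1>p$, the fact recorded above (applied with $\zeta=\eta$, $\theta=\gamma+1$) shows $\eta$ is integrable against $g$ and $t\mapsto\int_0^t\eta_u(g_u)\,dg_u$ is a dominated path, completing the proof.
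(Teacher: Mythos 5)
Your overall route is the same as the paper's: bound $\eta _{t}\left( g_{t},\cdot \right) $ directly, then expand $\left( \eta _{t}-\eta _{s}\right) \left( g_{t},v\right) $ using $\eta _{s}\left( g_{t},\cdot \right) =\eta _{s}\left( g_{s},g_{s,t}\,\cdot \right) $ together with the multiplicativity relation for $\mathcal{I}^{\prime }$, and estimate the pieces by the hypothesis on $\varphi $, the norm $\left\Vert \beta \right\Vert _{\gamma }$, and the smallness of the high tensor levels of $g_{s,t}$. Your uniform bound, your level-$1$ increment estimate, and your final deduction of $\left\Vert \eta \right\Vert _{\gamma +1}<\infty $ and integrability all agree with the paper's argument.

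The one step that does not survive scrutiny is the claimed exact cancellation at levels $k\geq 2$. The two occurrences of the left-multiplication-by-$g_{s,t}$ correction are truncated at different degrees, so they do not cancel completely. Concretely, writing $\mathcal{I}^{\prime }\left( \pi _{k}\left( v\right) \right) =w_{1}\otimes w_{2}$ with $w_{1}\in \mathcal{V}^{\otimes \left( k-1\right) }$, one correction comes from $\beta _{s}\left( g_{t},w_{1}\right) -\beta _{s}\left( g_{s},w_{1}\right) =\beta _{s}\bigl( g_{s},\left( g_{s,t}-1\right) w_{1}\bigr) $, where the product $\left( g_{s,t}-1\right) w_{1}$ lives in the algebra truncated at degree $\left[ p\right] $, so the first factor $\pi _{j}\left( g_{s,t}\right) $ contributes up to $j=\left[ p\right] +1-k$; the other comes from $1_{n,2}^{\prime }\bigl( \left( g_{s,t}\otimes g_{s,t}\right) \mathcal{I}^{\prime }\left( \pi _{k}\left( v\right) \right) \bigr) $, where $1_{n,2}^{\prime }$ projects the first factor onto $\sum_{j=1}^{\left[ p\right] -1}\mathcal{V}^{\otimes j}$ only, so there $j$ contributes up to $\left[ p\right] -k$. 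The terms at the boundary degree therefore survive, and this is exactly why the paper's identity $\left( \ref{lemma controlled path is integrable inner1}\right) $ carries the residual sum $\sum_{\sigma ,\left\vert \sigma \right\vert \geq \left[ p\right] +1-k}\beta _{s}\left( g_{s},\sigma \left( g_{s,t}\right) \cdot \right) \pi _{1}\left( \cdot \right) \mathcal{I}^{\prime }\left( \pi _{k}\left( v\right) \right) $ alongside $\left( \beta _{t}-\beta _{s}\right) \left( g_{t},\cdot \right) \pi _{1}\left( \cdot \right) \mathcal{I}^{\prime }\left( v\right) $. The gap is harmless but must be filled: these leftovers are the level-$k$ analogues of the $\pi _{\left[ p\right] }\left( g_{s,t}\right) $ leftover you did identify at level $1$, and they are bounded by $\left\Vert \beta \right\Vert _{\gamma }\left( 1\vee \omega \left( 0,T\right) \right) \omega \left( s,t\right) ^{\frac{\left[ p\right] +1-k}{p}}\leq C\left\Vert \beta \right\Vert _{\gamma }\omega \left( s,t\right) ^{\frac{\gamma +1-k}{p}}$ using $\gamma \leq \left[ p\right] $, so the stated conclusion is unaffected once they are accounted for.
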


\begin{proof}
It is clear that for some constant $c$ depending on $\mathcal{I}^{\prime }$, 
\begin{equation*}
\left\Vert \eta _{t}\left( g_{t},\cdot \right) \right\Vert \leq \left\Vert
\varphi _{t}\right\Vert +c\left\Vert \beta _{t}\left( g_{t},\cdot \right)
\right\Vert \leq \sup_{0\leq t\leq T}\left\Vert \varphi _{t}\right\Vert
+c\left\Vert \beta \right\Vert _{\gamma }\text{, }\forall t\in \left[ 0,T%
\right] \text{.}
\end{equation*}

For $s<t$ and $v\in 
\mathbb{R}
\oplus \mathcal{V}\oplus \cdots \oplus \mathcal{V}^{\otimes \left[ p\right]
} $ (calculation or based on the proof in \cite{lyons2014integration}), we
have%
\begin{eqnarray}
\left( \eta _{t}-\eta _{s}\right) \left( g_{t},v\right) &=&\left( \varphi
_{t}-\varphi _{s}-\beta _{s}\left( g_{s},g_{s,t}\right) \right) \pi
_{1}\left( v\right) +\left( \beta _{t}-\beta _{s}\right) \left( g_{t},\cdot
\right) \pi _{1}\left( \cdot \right) \mathcal{I}^{\prime }\left( v\right)
\label{lemma controlled path is integrable inner1} \\
&&+\tsum\nolimits_{\sigma \in \mathcal{P}_{\left[ p\right] },\left\vert
\sigma \right\vert =\left[ p\right] }\beta _{s}\left( g_{s},\sigma \left(
g_{s,t}\right) \right) \pi _{1}\left( v\right)  \notag \\
&&+\tsum\nolimits_{k=2}^{\left[ p\right] }\tsum\nolimits_{\sigma \in 
\mathcal{P}_{\left[ p\right] },\left\vert \sigma \right\vert \geq \left[ p%
\right] +1-k}\beta _{s}\left( g_{s},\sigma \left( g_{s,t}\right) \cdot
\right) \pi _{1}\left( \cdot \right) \mathcal{I}^{\prime }\left( \pi
_{k}\left( v\right) \right) \text{.}  \notag
\end{eqnarray}%
Since $\left\Vert \beta \right\Vert _{\gamma }<\infty $ and $\mathcal{I}%
^{\prime }\left( \mathcal{V}^{\otimes k}\right) \subseteq \mathcal{V}%
^{\otimes \left( k-1\right) }\otimes \mathcal{V}$, $k=2,\dots ,\left[ p%
\right] $, we have, for some structural constant $C$ depending on the norm
of the mapping $\mathcal{I}^{\prime }$, 
\begin{equation*}
\sup_{v\in \mathcal{V}^{\otimes k},\left\Vert v\right\Vert =1}\left\Vert
\left( \beta _{t}-\beta _{s}\right) \left( g_{t},\cdot \right) \pi
_{1}\left( \cdot \right) \mathcal{I}^{\prime }\left( v\right) \right\Vert
\leq C\left\Vert \beta \right\Vert _{\gamma }\omega \left( s,t\right) ^{%
\frac{\gamma +1-k}{p}}\text{, }k=1,2,\dots ,\left[ p\right] \text{.}
\end{equation*}%
Moreover, for $s<t$,%
\begin{gather*}
\left\Vert \beta _{s}\left( g_{s},\sigma \left( g_{s,t}\right) \right)
\right\Vert \leq \left\Vert \beta \right\Vert _{\gamma }\omega \left(
s,t\right) ^{\frac{\left[ p\right] }{p}}\text{, }\forall \sigma \in \mathcal{%
P}_{\left[ p\right] }\text{, }\left\vert \sigma \right\vert =\left[ p\right] 
\text{,} \\
\left\Vert \beta _{s}\left( g_{s},\sigma \left( g_{s,t}\right) \cdot \right)
\pi _{1}\left( \cdot \right) \mathcal{I}^{\prime }\left( \pi _{k}\left(
\cdot \right) \right) \right\Vert \leq \left\Vert \beta \right\Vert _{\gamma
}\left( 1\vee \omega \left( 0,T\right) \right) \omega \left( s,t\right) ^{%
\frac{\left[ p\right] +1-k}{p}}\text{, }\forall \sigma \in \mathcal{P}_{%
\left[ p\right] }\text{, }\left\vert \sigma \right\vert \geq \left[ p\right]
+1-k\text{.}
\end{gather*}%
Hence, since $\gamma \leq \left[ p\right] $, combined with $\left( \ref%
{lemma controlled path is integrable inner1}\right) $ and $\left( \ref%
{Definition of controlled path}\right) $, for some $C\,=C(M,p,\omega \left(
0,T\right) )$, we have 
\begin{equation*}
\left\Vert \left( \eta _{t}-\eta _{s}\right) \left( g_{t},\cdot \right)
\right\Vert _{k}\leq C\left\Vert \beta \right\Vert _{\gamma }\omega \left(
s,t\right) ^{\frac{\gamma +1-k}{p}}\text{, }\forall s<t\text{, }k=1,2,\dots ,%
\left[ p\right] \text{.}
\end{equation*}
\end{proof}

For $\gamma \geq 1$, $\lfloor \gamma \rfloor $ denotes the largest integer
which is strictly less than $\gamma $. For $\sigma _{i}\in \mathcal{P}_{%
\left[ p\right] }$, $i=1,\dots ,l$, $\left\vert \sigma _{1}\right\vert
+\cdots +\left\vert \sigma _{l}\right\vert \leq \left[ p\right] $, we denote
by $\sigma _{1}\ast \cdots \ast \sigma _{l}\ $the continuous linear mapping
from $\mathcal{V}^{\otimes (\left\vert \sigma _{1}\right\vert +\cdots
+\left\vert \sigma _{l}\right\vert )}$ to $\mathcal{V}^{\otimes \left\vert
\sigma _{1}\right\vert }\otimes \cdots \otimes \mathcal{V}^{\otimes
\left\vert \sigma _{l}\right\vert }$ satisfying $\left( \sigma _{1}\ast
\cdots \ast \sigma _{l}\right) \left( a\right) =\sigma _{1}\left( a\right)
\otimes \cdots \otimes \sigma _{l}\left( a\right) $ for all $a\in \mathcal{G}%
_{\left[ p\right] }$ (see \cite{lyons2014integration} for more details).

\begin{definition}[$\protect\beta \left( f\left( \protect\rho \right)
\right) $]
\label{Definition one-form of f(rho)}Let $\rho _{\cdot }=\rho
_{0}+\int_{0}^{\cdot }\beta \left( g\right) dg:\left[ 0,T\right] \rightarrow 
\mathcal{U}$ be a dominated path and $f:\mathcal{U}\rightarrow \mathcal{W}$
be a $\limfunc{Lip}\left( \gamma \right) $ function for some $\gamma >p-1$.
We define $\beta \left( f\left( \rho \right) \right) :\left[ 0,T\right]
\rightarrow B\left( \mathcal{G}_{\left[ p\right] },\mathcal{W}\right) \ $by,
for $a,b\in \mathcal{G}_{\left[ p\right] }$ and $s\in \left[ 0,T\right] $,%
\begin{equation*}
\beta \left( f\left( \rho \right) \right) _{s}\left( a,b\right)
=\sum_{l=1}^{\lfloor \gamma \rfloor }\frac{1}{l!}\left( D^{l}f\right) \left(
f\left( \rho _{s}\right) \right) \beta _{s}\left( g_{s},\cdot \right)
^{\otimes l}\sum_{\sigma _{i}\in \mathcal{P}_{\left[ p\right] },\left\vert
\sigma _{1}\right\vert +\cdots +\left\vert \sigma _{l}\right\vert \leq \left[
p\right] }\left( \sigma _{1}\ast \cdots \ast \sigma _{l}\right) \left(
g_{s}^{-1}a\left( b-1\right) \right) \text{.}
\end{equation*}
\end{definition}

\begin{definition}[Integral]
\label{Definition of integral}Suppose $\rho :\left[ 0,T\right] \rightarrow 
\mathcal{U}\ $is a path dominated by $g\in C^{p-var}\left( \left[ 0,T\right]
,\mathcal{G}_{\left[ p\right] }\right) $ and $f:\mathcal{U}\rightarrow
L\left( \mathcal{V},\mathcal{W}\right) $ is a $\limfunc{Lip}\left( \gamma
\right) $ function for some $\gamma >p-1$. With $\beta \left( f\left( \rho
\right) \right) $ in Definition \ref{Definition one-form of f(rho)}, if we
define $\beta :\left[ 0,T\right] \rightarrow B\left( \mathcal{G}_{\left[ p%
\right] },\mathcal{W}\right) $ by%
\begin{equation}
\beta _{s}\left( a,b\right) =f\left( \rho _{s}\right) \pi _{1}\left(
g_{s}^{-1}a\left( b-1\right) \right) +\beta \left( f\left( \rho \right)
\right) _{s}\left( g_{s},\cdot \right) \otimes \pi _{1}\left( \cdot \right) 
\mathcal{I}^{\prime }\left( g_{s}^{-1}a\left( b-1\right) \right) \text{, }%
\forall a,b\in \mathcal{G}_{\left[ p\right] }\text{, }\forall s\text{,}
\label{one-form of integral}
\end{equation}%
then $\beta $ is integrable against $g$ and we define the integral $\int
f\left( \rho \right) dx:\left[ 0,T\right] \rightarrow \mathcal{W}$ by%
\begin{equation*}
\int_{0}^{t}f\left( \rho _{u}\right) dx_{u}:=\int_{0}^{t}\beta _{u}\left(
g_{u}\right) dg_{u}\text{, }\forall t\in \left[ 0,T\right] \text{.}
\end{equation*}
\end{definition}

That $\beta $ is integrable against $g$ follows from Lemma \ref{Lemma
controlled path is integrable}. When $\mathcal{G}_{\left[ p\right] }$ is the
nilpotent Lie group, the integral coincides with the first level of the
rough integral in \cite{lyons1998differential}. When $\mathcal{G}_{\left[ p%
\right] }$ is the Butcher group the integral coincides with the integral in 
\cite{gubinelli2010ramification}.

\begin{definition}[Solution]
\label{Definition of solution}For $\gamma +1>p\geq 1$, suppose $g\in
C^{p-var}\left( \left[ 0,T\right] ,\mathcal{G}_{\left[ p\right] }\right) $
and $f:\mathcal{U}\rightarrow L\left( \mathcal{V},\mathcal{U}\right) $ is a $%
\limfunc{Lip}\left( \gamma \right) $ function. We say $y$ is a solution to
the rough differential equation (with $x:=\pi _{1}\left( g\right) $)%
\begin{equation}
dy=f\left( y\right) dx\text{, }y_{0}=\xi \in \mathcal{U}\text{,}
\label{RDE1}
\end{equation}%
if $y\ $is a path dominated by $g$, and $y_{\cdot }=\xi +\int_{0}^{\cdot
}f\left( y_{u}\right) dx_{u}$ with the integral defined in Definition \ref%
{Definition of integral}.
\end{definition}

Since dominated paths are defined through integrable one-forms, instead of
formulating the fixed-point problem in the space of paths as in Definition %
\ref{Definition of solution}, we could also formulate the fixed-point
problem in the space of integrable one-forms, and $y$ is called a solution
to $\left( \ref{RDE1}\right) $ if the one-form associated with $y$ is a
fixed point of the mapping $\beta \mapsto \hat{\beta}$ where $\hat{\beta}$
is the one-form associated with $\int f\left( y\right) dx$.

\section{Existence, Uniqueness and Continuity of the Solution}

Schwartz gave a beautiful proof in \cite{schwartz1989convergence} of the
convergence of the series of Picard iterations for SDEs. Instead of working
with contraction mapping on small intervals and pasting the local solutions
together, he used the iterative expression of the differences between the $n$%
th and $\left( n+1\right) $th Picard iterations and proved that the sequence
of differences decay factorially on the whole interval. Put in the simplest
form, his argument can be summarized as follows. Suppose $f$ is $\limfunc{Lip%
}\left( 1\right) $ and consider the SDE:%
\begin{equation*}
dX_{t}=f\left( X_{t}\right) dB_{t}\text{, }X_{0}=\xi \text{.}
\end{equation*}%
We define the series of Picard iterations by $X_{t}^{n+1}=\xi
+\int_{0}^{t}f\left( X_{u}^{n}\right) dB_{u}$ with $X_{t}^{0}\equiv \xi $.
Then by using It\^{o}'s isometry and the Lipschitz property of $f$, we have%
\begin{equation*}
E\left( \left\vert X_{t}^{n+1}-X_{t}^{n}\right\vert ^{2}\right)
=E\int_{0}^{t}\left\vert f\left( X_{u}^{n}\right) -f\left(
X_{u}^{n-1}\right) \right\vert ^{2}du\leq \left\Vert f\right\Vert _{\limfunc{%
Lip}\left( 1\right) }^{2}\int_{0}^{t}E\left( \left\vert
X_{u}^{n}-X_{u}^{n-1}\right\vert ^{2}\right) du\text{.}
\end{equation*}%
By iterating this process, we obtain a factorial decay and the global
convergence of the Picard series.

We will try to extend his argument to RDEs. However, there are several
points to pay attention to: generally, $\limfunc{Lip}\left( 1\right) $ is
insufficient for rough integral to be well-defined and it is illegitimate to
take modulus inside the rough integral; there is no $L^{2}$ space and no It%
\^{o}'s isometry for general rough paths, so the factorial decay can not be
obtained in a similar way. We will rely critically on the Division Property
of Lipschitz functions, and rely critically on the factorial decay of the
Signature of a rough path \cite{lyons1998differential}. In particular, we
prove that the one-forms associated with the differences between the $n$th
and $\left( n+1\right) $th Picard iterations decay factorially in operator
norm as $n$ tends to infinity on the whole interval. As a consequence, the
one-forms associated with the Picard iterations converge in operator norm,
which implies the convergence of the Picard iterations and the convergence
of their group-valued enhancements. By using the factorial decay of the
iterated integrals, we can prove the solution is unique. The continuity of
the solution with respect to the driving noise follows from the uniform
convergence of the Picard iterations for the rough differential equations
whose driving rough paths are uniformly bounded in $p$-variation.

Let $\mathcal{U}$ and\ $\mathcal{V}$ be two Banach spaces.

\begin{definition}[Picard Iterations]
\label{Definition of yn}For $\gamma +1>p\geq 1$, suppose $g\in
C^{p-var}\left( \left[ 0,T\right] ,\mathcal{G}_{\left[ p\right] }\right) $, $%
f:\mathcal{U}\rightarrow L\left( \mathcal{V},\mathcal{U}\right) $ is a $%
\limfunc{Lip}\left( \gamma \right) $ function and $\xi \in \mathcal{U}$. We
define the series of Picard iterations associated with the rough
differential equation $dy=f\left( y\right) dx$, $y_{0}=\xi $, by 
\begin{equation*}
y_{t}^{n}:=\xi +\int_{0}^{t}f\left( y_{u}^{n-1}\right) dx_{u}\text{, }%
\forall t\in \left[ 0,T\right] \text{, with }y_{t}^{0}\equiv \xi \text{.}
\end{equation*}
\end{definition}

\begin{definition}
\label{Definition of betan}We define $\zeta ^{n}:\left[ 0,T\right]
\rightarrow B\left( \mathcal{G}_{\left[ p\right] },\mathcal{U}\right) $, $%
n\geq 1$, by 
\begin{equation*}
\zeta _{s}^{n}\left( a,b\right) =f\left( y_{s}^{n-1}\right) \pi _{1}\left(
g_{s}^{-1}a\left( b-1\right) \right) +\beta \left( f\left( y^{n-1}\right)
\right) _{s}\left( g_{s},\cdot \right) \pi _{1}\left( \cdot \right) \mathcal{%
I}^{\prime }\left( g_{s}^{-1}a\left( b-1\right) \right) ,\forall a,b\in 
\mathcal{G}_{\left[ p\right] },\forall s,
\end{equation*}%
where $\beta \left( f\left( y^{n-1}\right) \right) $ is defined in term of $%
y_{\cdot }^{n-1}=\xi +\int_{0}^{\cdot }\zeta ^{n-1}\left( g\right) dg$ as in
Definition \ref{Definition one-form of f(rho)} with $\zeta ^{0}\equiv 0$.
\end{definition}

Then based on the definition of the integral in Definition \ref{Definition
of integral}, $y_{\cdot }^{n}=\xi +\int_{0}^{\cdot }\zeta ^{n}\left(
g\right) dg$, $n\geq 0$, and $\left\{ y^{n}\right\} _{n=0}^{\infty }$ are
paths dominated by $g$.

\begin{lemma}
\label{Lemma betan are uniformly controlled}Suppose $g\in C^{p-var}\left( %
\left[ 0,T\right] ,\mathcal{G}_{\left[ p\right] }\right) $ is controlled by $%
\omega $, and $f:\mathcal{U}\rightarrow L\left( \mathcal{V},\mathcal{U}%
\right) $ is a $\limfunc{Lip}\left( \gamma \right) $ function for some $%
\gamma \in (p-1,\left[ p\right] ]$. Then there exists a constant $%
C=C(p,\gamma ,\left\Vert f\right\Vert _{\limfunc{Lip}\left( \gamma \right)
},\omega \left( 0,T\right) )$ such that 
\begin{equation*}
\sup_{n\geq 0}\left\Vert \zeta ^{n}\right\Vert _{\gamma +1}\leq C.
\end{equation*}
\end{lemma}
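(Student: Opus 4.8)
**The plan is to prove the uniform bound by induction on $n$, using Lemma \ref{Lemma controlled path is integrable} together with the Division Property (Lemma \ref{Lemma division property}) to control the one-form $\zeta^n$ in terms of $\zeta^{n-1}$.**

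First I would observe the structural shape of the recursion. By Definition \ref{Definition of betan}, $\zeta^n$ is built from two pieces: the ``value'' term $f(y^{n-1}_s)\pi_1(\cdot)$ and the ``higher'' term $\beta(f(y^{n-1}))_s(g_s,\cdot)\pi_1(\cdot)\mathcal{I}'(\cdot)$. The path $y^{n-1}$ is dominated by $g$ with associated one-form $\zeta^{n-1}$, so by the very construction in Definition \ref{Definition of integral}, $\zeta^n$ is exactly the one-form $\eta$ produced by Lemma \ref{Lemma controlled path is integrable} applied with $\varphi_t = f(y^{n-1}_t)$ and $\beta = \beta(f(y^{n-1}))$. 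Hence the conclusion of that lemma gives
\[
\|\zeta^n\|_{\gamma+1}\ \le\ C_0\Big(\sup_t\|f(y^{n-1}_t)\| + \|\beta(f(y^{n-1}))\|_\gamma\Big)
\]
for a constant $C_0 = C_0(M,p,\omega(0,T))$ (absorbing the structural constant $c$). So everything reduces to bounding $\sup_t\|f(y^{n-1}_t)\|$ and $\|\beta(f(y^{n-1}))\|_\gamma$ uniformly in $n$.

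Next I would bound these two quantities. The term $\sup_t\|f(y^{n-1}_t)\|$ is controlled by $\|f\|_{\mathrm{Lip}(\gamma)}$ directly if $f$ is globally bounded, or else one localizes: a crude a priori estimate shows $y^{n-1}$ stays in a bounded neighborhood of $\xi$ on $[0,T]$ (its $p$-variation is controlled by $\|\zeta^{n-1}\|_{\gamma+1}\,\omega(0,T)^{1/p}$, which the induction hypothesis keeps bounded), so $\|f(y^{n-1}_t)\|\le C_1(\|f\|_{\mathrm{Lip}(\gamma)},\|\xi\|,\omega(0,T))$. For $\|\beta(f(y^{n-1}))\|_\gamma$, I would examine Definition \ref{Definition one-form of f(rho)}: each summand involves $(D^l f)(f(y^{n-1}_s))$ contracted with $\beta^{n-1}_s(g_s,\cdot)^{\otimes l}$ and the projections $\sigma_1\ast\cdots\ast\sigma_l$. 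The norms $\|(D^l f)\|$ are bounded by $\|f\|_{\mathrm{Lip}(\gamma)}$ (on the bounded set where $y^{n-1}$ lives), and $\|\beta^{n-1}_s(g_s,\cdot)\|\le\|\zeta^{n-1}\|_{\gamma+1}$; the increment estimate needed for $\|\cdot\|_\gamma$ uses the Division Property to write $f(y^{n-1}_t)-f(y^{n-1}_s)=h(y^{n-1}_t,y^{n-1}_s)(y^{n-1}_t-y^{n-1}_s)$ with $h$ of controlled $\mathrm{Lip}(\gamma-1)$ norm, together with the fact that $y^{n-1}$ is dominated by $g$ so its increments are governed by $\omega$. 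The upshot is a bound of the form
\[
\|\beta(f(y^{n-1}))\|_\gamma\ \le\ C_2\,\|f\|_{\mathrm{Lip}(\gamma)}\,\big(1 + \|\zeta^{n-1}\|_{\gamma+1}\big)^{\lfloor\gamma\rfloor}.
\]

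**The main obstacle is the apparent self-reinforcing nonlinearity:** the bound on $\|\zeta^n\|_{\gamma+1}$ above involves a \emph{power} of $\|\zeta^{n-1}\|_{\gamma+1}$, so a naive induction would blow up. The resolution — which is the real content of the lemma — is that the constant $C_2$ carries a factor of $\omega(0,T)^{\theta'}$ for some $\theta'>0$ coming from the increment estimates (the one-form $\beta(f(y^{n-1}))$ only enters through increments of $y^{n-1}$ over $[s,t]$, each of which contributes a positive power of $\omega(s,t)\le\omega(0,T)$). Therefore I would first reduce to a small time horizon: choose a (data-dependent but $n$-independent) splitting $0=T_0<T_1<\cdots<T_N=T$ of $[0,T]$ into finitely many subintervals on each of which $\omega$ is small enough that the map $R\mapsto C_0(C_1 + C_2\|f\|_{\mathrm{Lip}(\gamma)}(1+R)^{\lfloor\gamma\rfloor})$ has a fixed point (equivalently, a forward-invariant interval $[0,R_*]$), closing the induction on that subinterval with a uniform bound $R_*$. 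One then concatenates across the finitely many subintervals — using that the ``initial condition'' feeding each subinterval is the value $y^{n-1}_{T_j}$, itself uniformly bounded — to obtain the global bound $C = C(p,\gamma,\|f\|_{\mathrm{Lip}(\gamma)},\omega(0,T))$, where the dependence on $\omega(0,T)$ enters through $N$, the number of subintervals needed. Care is needed to verify that the ``small $\omega$'' threshold depends only on $p,\gamma,\|f\|_{\mathrm{Lip}(\gamma)}$ and not on $n$, which it does because all the constants $C_0,C_1,C_2$ are of that kind.
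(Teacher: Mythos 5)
Your skeleton — induct on $n$, feed $\varphi_t=f(y^{n-1}_t)$ and $\beta(f(y^{n-1}))$ into Lemma \ref{Lemma controlled path is integrable}, recognize that the resulting recursion for $\Vert\zeta^{n}\Vert_{\gamma+1}$ is superlinear in $\Vert\zeta^{n-1}\Vert_{\gamma+1}$, then localize in time and paste — is essentially the paper's, but the step where you resolve the nonlinearity does not work. You claim the coefficient $C_{2}$ of the superlinear term carries a positive power of $\omega(0,T)$, so that refining the time partition produces a forward-invariant interval for the map $R\mapsto C_{0}(C_{1}+C_{2}\Vert f\Vert(1+R)^{\lfloor\gamma\rfloor})$. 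It does not: the norm $\Vert\cdot\Vert_{\gamma+1}$ contains the plain supremum $\sup_{t}\Vert\zeta_{t}^{n}(g_{t},\cdot)\Vert$, which by the first display in the proof of Lemma \ref{Lemma controlled path is integrable} is controlled only by $\Vert f(y_{t}^{n-1})\Vert+c\,\Vert\beta(f(y^{n-1}))_{t}(g_{t},\cdot)\Vert$, and the $l=1$ term of $\beta(f(y^{n-1}))_{t}(g_{t},\cdot)$ is $(Df)$ applied to $\zeta_{t}^{n-1}(g_{t},\cdot)$ composed with a structural projection — a contribution of size about $\Vert f\Vert_{\limfunc{Lip}(\gamma)}\Vert\zeta^{n-1}\Vert_{\gamma+1}$ carrying \emph{no} factor $\omega(s,t)^{\theta'}$. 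So the recursion has the shape $R_{n}\leq\Vert f\Vert_{\limfunc{Lip}(\gamma)}(1+C_{p,\gamma}R_{n-1}+\cdots)$, its slope $C_{p,\gamma}\Vert f\Vert_{\limfunc{Lip}(\gamma)}$ is unaffected by shrinking the interval, and when $\Vert f\Vert_{\limfunc{Lip}(\gamma)}$ is large no fixed point exists however fine the partition.

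The missing ingredient is the rescaling the paper uses: replace $f$ by $\hat{f}:=c^{-1}f$ and $g$ by the dilation $\hat{g}:=\sum_{k}c^{k}\pi_{k}(g)$ with $c=\lambda_{p,\gamma}^{-1}\Vert f\Vert_{\limfunc{Lip}(\gamma)}$. This leaves the Picard iterates unchanged, makes $\Vert\hat{f}\Vert_{\limfunc{Lip}(\gamma)}\leq\lambda_{p,\gamma}$ small enough that the affine recursion $\lambda(1+C_{p,\gamma}\lambda_{n})$ closes, and transfers the largeness of $f$ into the control $\hat{\omega}=c^{p}\omega$ — which, unlike the sup-norm term, enters only through increments and therefore \emph{can} be removed by covering $[0,T]$ with finitely many overlapping subintervals on which $\hat{\omega}\leq1$ and pasting via the cocycle identity $(\beta_{t}-\beta_{s})(g_{t},v)=(\beta_{t}-\beta_{u})(g_{t},v)+(\beta_{u}-\beta_{s})(g_{u},g_{u,t}v)$. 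Two smaller gaps: your concatenation across subintervals needs precisely this cocycle identity (the quantity bounded is an increment norm over pairs $s<t$ straddling the partition, so boundedness of the values $y_{T_{j}}^{n-1}$ is not enough); and verifying hypothesis $(\ref{Definition of controlled path})$ for $\varphi=f(y^{n-1})$ requires the full degree-$\lfloor\gamma\rfloor$ Taylor remainder estimate for $\limfunc{Lip}(\gamma)$ functions, not merely the first-order Division Property, since $\gamma$ may exceed $2$.
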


\begin{proof}
We first suppose $\omega \left( 0,T\right) \leq 1$, and prove that there
exists $\lambda _{p,\gamma }>0$ which only depends on $p$ and $\gamma $ such
that when $\left\Vert f\right\Vert _{\limfunc{Lip}\left( \gamma \right)
}\leq \lambda _{p,\gamma }$ we have\ $\sup_{n\geq 0}\left\Vert \zeta
^{n}\right\Vert _{\gamma +1}\leq 2\lambda _{p,\gamma }$. We prove it by
using mathematical induction. Suppose for some constant $\lambda _{n}\in
\left( 0,1\right) $,%
\begin{equation*}
\left\Vert \zeta ^{n}\right\Vert _{\gamma +1}\leq \lambda _{n}\text{,}
\end{equation*}%
which holds when $n=0$ since $\zeta ^{0}\equiv 0$. We want to prove that
there exists a constant $C_{p,\gamma }\geq 1$ such that%
\begin{equation*}
\left\Vert \zeta ^{n+1}\right\Vert _{\gamma +1}\leq \left\Vert f\right\Vert
_{\limfunc{Lip}\left( \gamma \right) }\left( 1+C_{p,\gamma }\lambda
_{n}\right) :=\lambda \left( 1+C_{p,\gamma }\lambda _{n}\right) \text{.}
\end{equation*}%
Then when $\lambda \in (0,\left( 2C_{p,\gamma }\right) ^{-1})$, if $\lambda
_{n}\leq \lambda /\left( 1-C_{p,\gamma }\lambda \right) $ then $\lambda
\left( 1+C_{p,\gamma }\lambda _{n}\right) \leq \lambda /\left( 1-C_{p,\gamma
}\lambda \right) $. Since $\lambda _{0}=0\leq \lambda /\left( 1-C_{p,\gamma
}\lambda \right) $, we have $\lambda _{n}\leq \lambda /\left( 1-C_{p,\gamma
}\lambda \right) \leq 2\lambda $ for all $n\geq 0$. It can be checked that $%
\zeta ^{n+1}$ is linear with respect to scalar multiplication of $f$, so we
assume $\left\Vert f\right\Vert _{\limfunc{Lip}\left( \gamma \right) }=1$,
and want to prove%
\begin{equation}
\left\Vert \zeta ^{n+1}\right\Vert _{\gamma +1}\leq 1+C_{p,\gamma }\lambda
_{n}\text{ \ when }\left\Vert \zeta ^{n}\right\Vert _{\lambda +1}\leq
\lambda _{n}\text{.}  \label{lemma inner 3}
\end{equation}%
By following similar proof as in \cite{lyons2014integration} of the
stability of dominated paths under composition with regular functions and by
using $\left\Vert f\right\Vert _{\limfunc{Lip}\left( \gamma \right) }=1$, $%
\omega \left( 0,T\right) \leq 1$ and $\left\Vert \zeta ^{n}\right\Vert
_{\gamma +1}\leq \lambda _{n}\in \left( 0,1\right) $, we have that there
exists $C_{p,\gamma }>0$ such that for any $s<t$,%
\begin{eqnarray*}
\left\Vert \left( \beta \left( f\left( y^{n}\right) \right) _{t}-\beta
\left( f\left( y^{n}\right) \right) _{s}\right) \left( g_{t},\cdot \right)
\right\Vert _{k} &\leq &C_{p,\gamma }\lambda _{n}\omega \left( s,t\right) ^{%
\frac{\gamma -k}{p}}\text{, }k=1,2,\dots ,\left[ p\right] -1\text{,} \\
\left\Vert f\left( y_{t}^{n}\right) -f\left( y_{s}^{n}\right) -\beta \left(
f\left( y^{n}\right) \right) _{s}\left( g_{s},g_{s,t}\right) \right\Vert
&\leq &C_{p,\gamma }\lambda _{n}\omega \left( s,t\right) ^{\frac{\gamma }{p}}%
\text{.}
\end{eqnarray*}%
Since $y_{\cdot }^{n+1}=\xi +\int_{0}^{\cdot }f\left( y^{n}\right) dx$, by
using Lemma \ref{Lemma controlled path is integrable}, we have%
\begin{eqnarray*}
\left\Vert \left( \zeta _{t}^{n+1}-\zeta _{s}^{n+1}\right) \left(
g_{t},\cdot \right) \right\Vert _{k} &\leq &C_{p,\gamma }\lambda _{n}\omega
\left( s,t\right) ^{\frac{\gamma +1-k}{p}}\text{, }\forall s<t\text{, }%
k=1,2,\dots ,\left[ p\right] \text{,} \\
\left\Vert \zeta _{t}^{n+1}\left( g_{t},\cdot \right) \right\Vert &\leq
&1+C_{p,\gamma }\lambda _{n}\text{, }\forall t\text{,}
\end{eqnarray*}%
which implies $\left( \ref{lemma inner 3}\right) $.

For the general case, we rescale the differential equation and consider $dy=%
\hat{f}\left( y\right) d\hat{x}$, $y_{0}=\xi $, with $c:=\lambda _{p,\gamma
}^{-1}||f||_{\limfunc{Lip}\left( \gamma \right) }$, $\hat{f}:=c^{-1}f$ and $%
\hat{g}:=\sum_{k=0}^{\left[ p\right] }c^{k}\pi _{k}\left( g\right) $ with $%
\hat{x}:=\pi _{1}\left( \hat{g}\right) $. Then the solution path stays
unchanged, and we have $||\hat{f}||_{\limfunc{Lip}\left( \gamma \right)
}\leq \lambda _{p,\gamma }$. If we denote by $\left\{ \beta ^{n}\right\}
_{n} $ the one-forms (as in Definition \ref{Definition of betan}) associated
with the Picard iterations of $dy=\hat{f}\left( y\right) d\hat{x}$, $%
y_{0}=\xi $, then it can be proved inductively that,%
\begin{equation*}
\zeta _{s}^{n}\left( g_{t},v\right) =\beta _{s}^{n}\left( \hat{g}_{t},\hat{v}%
\right) \text{, }\forall v\in 
\mathbb{R}
\oplus \mathcal{V}\oplus \cdots \oplus \mathcal{V}^{\otimes \left[ p\right] }%
\text{ with }\hat{v}:=\tsum\nolimits_{k=0}^{\left[ p\right] }c^{k}\pi
_{k}\left( v\right) \text{, }\forall s<t\text{, }\forall n\geq 1\text{.}
\end{equation*}%
Hence, if we can prove $\sup_{n\geq 0}\left\Vert \beta ^{n}\right\Vert
_{\gamma +1}<\infty $ then $\sup_{n\geq 0}\left\Vert \zeta ^{n}\right\Vert
_{\gamma +1}<\infty $. Denote $\hat{\omega}\left( s,t\right) :=c^{p}\omega
\left( s,t\right) $ for $s<t$. We divide the interval $\left[ 0,T\right] $
into the union of finitely many overlapping subintervals $\cup \left[
s_{i},t_{i}\right] $ such that $\hat{\omega}\left( s_{i},t_{i}\right) \leq 1$
for all $i$. Because these subintervals overlap, we can paste their
estimates together. Indeed, by using the cocyclic property, for $s<u<t$,%
\begin{equation*}
\left( \beta _{t}^{n}-\beta _{s}^{n}\right) \left( g_{t},v\right) =\left(
\beta _{t}^{n}-\beta _{u}^{n}\right) \left( g_{t},v\right) +\left( \beta
_{u}^{n}-\beta _{s}^{n}\right) \left( g_{u},g_{u,t}v\right) \text{, }\forall
v\in \mathcal{V}\oplus \cdots \oplus \mathcal{V}^{\otimes \left[ p\right] }%
\text{,}
\end{equation*}%
which implies%
\begin{eqnarray*}
\left\Vert \left( \beta _{t}^{n}-\beta _{s}^{n}\right) \left( \hat{g}%
_{t},\cdot \right) \right\Vert _{k} &\leq &\left\Vert \left( \beta
_{t}^{n}-\beta _{u}^{n}\right) \left( \hat{g}_{t},\cdot \right) \right\Vert
_{k}+\tsum\nolimits_{j=k}^{\left[ p\right] }\left\Vert \left( \beta
_{u}^{n}-\beta _{s}^{n}\right) \left( \hat{g}_{u},\cdot \right) \right\Vert
_{j} \\
&\leq &c_{1}\hat{\omega}\left( u,t\right) ^{\frac{\gamma +1-k}{p}%
}+c_{2}\tsum\nolimits_{j=k}^{\left[ p\right] }\hat{\omega}\left( s,u\right)
^{\frac{\gamma +1-j}{p}}\leq c_{3}\hat{\omega}\left( s,t\right) ^{\frac{%
\gamma +1-k}{p}}\text{,}
\end{eqnarray*}%
where $c_{i}$ may depend on $\hat{\omega}\left( 0,T\right) $.
\end{proof}

\begin{definition}
With the Picard iterations $\left\{ y^{n}\right\} _{n}$ in Definition \ref%
{Definition of yn}, we define $z^{n}:\left[ 0,T\right] \rightarrow \mathcal{U%
}$, $n\geq 1$, by 
\begin{equation*}
z_{t}^{n}=y_{t}^{n}-y_{t}^{n-1}\text{, }t\in \left[ 0,T\right] \text{.}
\end{equation*}
\end{definition}

Since $\left\{ y^{n}\right\} _{n}$ are Picard iterations which satisfy $%
y_{\cdot }^{n+1}=\xi +\int_{0}^{\cdot }f\left( y_{u}^{n}\right) dx_{u}$ with 
$y_{\cdot }^{0}\equiv \xi $, by using the division property of $f$ (i.e. $%
f\left( x\right) -f\left( y\right) =h\left( x,y\right) \left( x-y\right) $
for all $x,y\in \mathcal{U}$ and $\left\Vert h\right\Vert _{\limfunc{Lip}%
\left( \gamma -1\right) }\leq C\left\Vert f\right\Vert _{\limfunc{Lip}\left(
\gamma \right) }$), we have the recursive expression of $\left\{
z^{n}\right\} _{n}$:%
\begin{equation*}
z_{t}^{n+1}=\tint\nolimits_{0}^{t}h\left( y_{u}^{n},y_{u}^{n-1}\right)
z_{u}^{n}dx_{u}\text{, with }z_{t}^{1}=f\left( \xi \right) \left(
x_{t}-x_{0}\right) \text{, }\forall t\in \left[ 0,T\right] \text{.}
\end{equation*}%
By iteration, we have%
\begin{eqnarray*}
z_{t}^{n+1} &=&\tidotsint\nolimits_{0<u_{1}<\cdots <u_{n}<t}h\left(
y_{u_{n}}^{n},y_{u_{n}}^{n-1}\right) \cdots h\left(
y_{u_{1}}^{1},y_{u_{1}}^{0}\right) z_{u_{1}}^{1}dx_{u_{1}}\cdots dx_{u_{n}}
\\
&=&\tidotsint\nolimits_{0<u_{0}<u_{1}<\cdots <u_{n}<t}h\left(
y_{u_{n}}^{n},y_{u_{n}}^{n-1}\right) \cdots h\left(
y_{u_{1}}^{1},y_{u_{1}}^{0}\right) f\left( \xi \right)
dx_{u_{0}}dx_{u_{1}}\cdots dx_{u_{n}}\text{, }\forall t\in \left[ 0,T\right] 
\text{.}
\end{eqnarray*}

Then when $n\geq \left[ p\right] $, the increment of $z^{n}$ on a small
interval $\left[ s,t\right] $ can be approximated by a linear combination of 
$\left[ p\right] $ time-varying cocyclic one-forms, and the "coefficients"
of the cocyclic one-forms are in the form of high-ordered iterated integrals
so decay factorially as $n$ tends to infinity. Hence, by relying on the
factorial decay of the iterated integrals, we can prove inductively that the
one-forms associated with $\left\{ z^{n}\right\} _{n}$ decays factorially in
operator norm, which in turn implies the convergence in operator norm of the
one-forms associated with the Picard iterations.

\begin{definition}
\label{Definition paths of iterated integrals}For $\gamma >p\geq 1$, suppose 
$g\in C^{p-var}\left( \left[ 0,T\right] ,\mathcal{G}_{\left[ p\right]
}\right) $ with $x:=\pi _{1}\left( g\right) $ and $f:\mathcal{U}\rightarrow
L\left( \mathcal{V},\mathcal{U}\right) $ is a $\limfunc{Lip}\left( \gamma
\right) $ function. Let $h$ be the function obtained in the division
property of $f$ as in Lemma \ref{Lemma division property}. For integers $%
n\geq l\geq 0$ and $0\leq s\leq t\leq T$, we define $\eta _{s,t}^{l,n}\in
L\left( \mathcal{U},\mathcal{U}\right) $, $l\geq 1$, and $\eta
_{s,t}^{0,n}\in \mathcal{U}$, recursively by 
\begin{gather*}
\eta _{s,t}^{l,n+1}:=\int_{s}^{t}h\left( y_{u}^{n+1},y_{u}^{n}\right) \eta
_{s,u}^{l,n}dx_{u}\text{,} \\
\text{with }\eta _{s,t}^{l,l}:=\int_{s}^{t}h\left(
y_{u}^{l},y_{u}^{l-1}\right) dx_{u}\text{, }l\geq 1\text{, and }\eta
_{s,t}^{0,0}:=f\left( \xi \right) \left( x_{t}-x_{s}\right) \text{.}
\end{gather*}
\end{definition}

The integrals are well-defined based on Lemma \ref{Lemma controlled path is
integrable} and inductive arguments. In particular, we have%
\begin{equation*}
z_{t}^{n+1}=\eta _{0,t}^{0,n}\text{, }\forall t\in \left[ 0,T\right] \text{.}
\end{equation*}%
We define $\eta _{s,t}^{l,n}$ for general $l$ and $s$ to make the induction
work.

Then we define the integrable one-form $\beta _{s,\cdot }^{l,n}$ associated
with the dominated path $\eta _{s,\cdot }^{l,n}$ and prove that $\beta
_{s,\cdot }^{l,n}$ decay factorially in operator norm as $\left( n-l\right) $
tends to infinity. 

For $\sigma _{1},\sigma _{2}\in \mathcal{P}_{\left[ p\right] }$, $\left\vert
\sigma _{1}\right\vert +\left\vert \sigma _{2}\right\vert \leq \left[ p%
\right] $, we denote by $\sigma _{1}\ast \sigma _{2}\ $the continuous linear
mapping from $\mathcal{V}^{\otimes (\left\vert \sigma _{1}\right\vert
+\left\vert \sigma _{2}\right\vert )}$ to $\mathcal{V}^{\otimes \left\vert
\sigma _{1}\right\vert }\otimes \mathcal{V}^{\otimes \left\vert \sigma
_{2}\right\vert }$ satisfying $\left( \sigma _{1}\ast \sigma _{2}\right)
\left( a\right) =\sigma _{1}\left( a\right) \otimes \sigma _{2}\left(
a\right) $ for all $a\in \mathcal{G}_{\left[ p\right] }$ (see \cite%
{lyons2014integration} for more details).

\begin{definition}
\label{Definition of one-forms of iterated integrals}With $\eta _{s,t}^{l,n}$
in Definition \ref{Definition paths of iterated integrals}, for integers $%
n\geq l\geq 0$ and $s\in \lbrack 0,T)$, we define the integrable one-form $%
\beta _{s,\cdot }^{l,n}:\left[ s,T\right] \rightarrow B\left( \mathcal{G}_{%
\left[ p\right] },L\left( \mathcal{U},\mathcal{U}\right) \right) $, $l\geq 1$%
, and $\beta _{s,\cdot }^{0,n}:\left[ s,T\right] \rightarrow B\left( 
\mathcal{G}_{\left[ p\right] },\mathcal{U}\right) $ (associated with $\eta
_{s,\cdot }^{l,n}$ and $\eta _{s,\cdot }^{0,n}$ respectively) recursively
by, for $t\in (s,T]$ and $a,b\in \mathcal{G}_{\left[ p\right] }$,%
\begin{eqnarray*}
\beta _{s,t}^{l,n+1}\left( a,b\right) &=&\beta _{t,t}^{n+1,n+1}\left(
a,b\right) \eta _{s,t}^{l,n}+h\left( y_{t}^{n+1},y_{t}^{n}\right) \beta
_{s,t}^{l,n}\left( g_{t},\cdot \right) \pi _{1}\left( \cdot \right) \mathcal{%
I}^{\prime }\left( g_{t}^{-1}a\left( b-1\right) \right) \\
&&+\beta \left( h\left( y^{n+1},y^{n}\right) \right) _{t}\left( g_{t},\cdot
\right) \beta _{s,t}^{l,n}\left( g_{t},\cdot \right) \tsum\limits_{\sigma
_{i}\in \mathcal{P}_{\left[ p\right] },\left\vert \sigma _{1}\right\vert
+\left\vert \sigma _{2}\right\vert \leq \left[ p\right] }\left( \sigma
_{1}\ast \sigma _{2}\right) \left( \cdot \right) \pi _{1}\left( \cdot
\right) \mathcal{I}^{\prime }\left( g_{t}^{-1}a\left( b-1\right) \right) 
\text{,} \\
\beta _{s,t}^{l,l}\left( a,b\right) &=&h\left( y_{t}^{l},y_{t}^{l-1}\right)
\pi _{1}\left( g_{t}^{-1}a\left( b-1\right) \right) +\beta \left( h\left(
y^{l},y^{l-1}\right) \right) _{t}\left( g_{t},\cdot \right) \pi _{1}\left(
\cdot \right) \mathcal{I}^{\prime }\left( g_{t}^{-1}a\left( b-1\right)
\right) \text{, }l\geq 1\text{,} \\
\beta _{s,t}^{0,0}\left( a,b\right) &=&f\left( \xi \right) \pi _{1}\left(
g_{t}^{-1}a\left( b-1\right) \right) \text{,}
\end{eqnarray*}%
where $\beta \left( h\left( y^{n+1},y^{n}\right) \right) $ is defined from $%
\left( y^{n+1},y^{n}\right) _{t}=\left( \xi ,\xi \right) +\int_{0}^{t}\left(
\zeta _{u}^{n+1},\zeta _{u}^{n}\right) \left( g_{u}\right) dg_{u}$ as in
Definition \ref{Definition one-form of f(rho)}.
\end{definition}

The notation in the definition of $\beta ^{l,n+1}$ may need some
explanations. For $k=1,\dots ,\left[ p\right] -1$ and $v\in \mathcal{V}%
^{\otimes \left( k+1\right) }$, we have $\mathcal{I}^{\prime }\left(
v\right) \in \mathcal{V}^{\otimes k}\otimes \mathcal{V}$. Since $\sigma
_{1}\ast \sigma _{2}:\mathcal{V}^{\otimes (\left\vert \sigma _{1}\right\vert
+\left\vert \sigma _{2}\right\vert )}\rightarrow \mathcal{V}^{\otimes
\left\vert \sigma _{1}\right\vert }\otimes \mathcal{V}^{\otimes \left\vert
\sigma _{2}\right\vert }$ and $\pi _{1}:\mathcal{V}\rightarrow \mathcal{V}$,
we have $\left( \sigma _{1}\ast \sigma _{2}\right) \left( \cdot \right) \pi
_{1}\left( \cdot \right) \mathcal{I}^{\prime }\left( v\right) \in \mathcal{V}%
^{\otimes \left\vert \sigma _{1}\right\vert }\otimes \mathcal{V}^{\otimes
\left\vert \sigma _{2}\right\vert }\otimes \mathcal{V}$ for any $v\in 
\mathcal{V}^{\otimes \left( \left\vert \sigma _{1}\right\vert +\left\vert
\sigma _{2}\right\vert +1\right) }$. Then in the expression 
\begin{equation*}
\beta \left( h\left( y^{n+1},y^{n}\right) \right) _{t}\left( g_{t},\cdot
\right) \beta _{s,t}^{l,n}\left( g_{t},\cdot \right) \left( \sigma _{1}\ast
\sigma _{2}\right) \left( \cdot \right) \pi _{1}\left( \cdot \right) 
\mathcal{I}^{\prime }\left( v\right) \text{ for }v\in \mathcal{V}^{\otimes
\left( \left\vert \sigma _{1}\right\vert +\left\vert \sigma _{2}\right\vert
+1\right) }\text{,}
\end{equation*}
we treat $\beta \left( h\left( y^{n+1},y^{n}\right) \right) _{t}\left(
g_{t},\cdot \right) $ as a continuous linear mapping on $\mathcal{V}%
^{\otimes \left\vert \sigma _{1}\right\vert }$ and treat $\beta
_{s,t}^{l,n}\left( g_{t},\cdot \right) $ as a continuous linear mapping on $%
\mathcal{V}^{\otimes \left\vert \sigma _{2}\right\vert }$.

Based on the definition of integral in Definition \ref{Definition of
integral}, we have $\eta _{s,t}^{l,n}=\int_{s}^{t}\beta _{s,u}^{l,n}\left(
g_{u}\right) dg_{u}$. In particular, 
\begin{equation*}
z_{t}^{n+1}=\int_{0}^{t}\beta _{0,u}^{0,n}\left( g_{u}\right) dg_{u}\text{, }%
\forall t\in \left[ 0,T\right] \text{.}
\end{equation*}

\begin{lemma}
\label{Lemma betan is a Cauchy sequence}Suppose $g\in C^{p-var}\left( \left[
0,T\right] ,\mathcal{G}_{\left[ p\right] }\right) $ is controlled by $\omega 
$, and $f:\mathcal{U}\rightarrow L\left( \mathcal{V},\mathcal{U}\right) $ is
a $\limfunc{Lip}\left( \gamma \right) $ function for some $\gamma \in (p,%
\left[ p\right] +1]$. Then there exist a constant $C=C(p,\gamma ,\left\Vert
f\right\Vert _{\limfunc{Lip}\left( \gamma \right) },\omega \left( 0,T\right)
)\ $such that%
\begin{equation}
\left\Vert \beta _{s,\cdot }^{l,n}\right\Vert _{\gamma }\leq \frac{C^{n-%
\left[ p\right] -l}}{\left( \frac{n-\left[ p\right] -l}{p}\right) !}\text{, }%
\forall n\geq l+\left[ p\right] +1\text{, }\forall l\geq 0\text{, }\forall
s\in \lbrack 0,T)\text{,}  \label{expression factorial decay of one-forms}
\end{equation}%
where $\beta _{s,\cdot }^{l,n}$ denotes $t\mapsto \beta _{s,t}^{l,n}$
introduced in Definition \ref{Definition of one-forms of iterated integrals}
for $t\in \left[ s,T\right] $.
\end{lemma}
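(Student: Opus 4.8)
The estimate is proved by induction on $n-l$, the decay ultimately coming from the factorial decay of the signature of a $p$-rough path \cite{lyons1998differential} (the neo-classical inequality) applied to the iterated integrals $\eta_{s,\cdot}^{l,n}$ of Definition~\ref{Definition paths of iterated integrals}. Write $n=l+[p]+m$, so the target is the bound $C^{m}/\left(\tfrac{m}{p}\right)!$ for $m\ge1$, with $C$ depending only on $p,\gamma,\|f\|_{\mathrm{Lip}(\gamma)},\omega(0,T)$; note $\lfloor\gamma\rfloor=[p]$ throughout. First I would record the uniform control of the ingredients appearing in the recursion of Definition~\ref{Definition of one-forms of iterated integrals}. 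Since $\gamma>p\ge[p]$, the map $f$ is in particular $\mathrm{Lip}([p])$ with $[p]\in(p-1,[p]]$, so Lemma~\ref{Lemma betan are uniformly controlled} gives $\sup_{n}\|\zeta^{n}\|_{[p]+1}\le C$ and hence all Picard iterates $y^{n}$ are dominated by $g$ with uniformly bounded one-forms. By the Division Property (Lemma~\ref{Lemma division property}) the map $h$ is $\mathrm{Lip}(\gamma-1)$ with $\gamma-1\in(p-1,[p]]$, and the stability of dominated paths under composition with regular functions (the estimate used in the proof of Lemma~\ref{Lemma betan are uniformly controlled}, following \cite{lyons2014integration}) supplies a constant $C_{0}$ with
\begin{equation*}
\sup_{n}\sup_{t}\big\|\beta_{t,t}^{n+1,n+1}(g_{t},\cdot)\big\|\le C_{0},\qquad \sup_{n}\big\|\beta\big(h(y^{n+1},y^{n})\big)\big\|_{\gamma-1}\le C_{0},\qquad \sup_{n}\sup_{t}\big\|h(y_{t}^{n+1},y_{t}^{n})\big\|\le C_{0}.
\end{equation*}
Iterating Lemma~\ref{Lemma controlled path is integrable} at Lipschitz degree $\gamma-1$ a total of $[p]$ times, starting from $\beta_{s,\cdot}^{l,l}$, which is the one-form of $\int h(y^{l},y^{l-1})\,dx$, I get the base-layer bound $\|\beta_{s,\cdot}^{l,n}\|_{\gamma}\le C_{1}$ for all $l\le n\le l+[p]$ and all $s\in[0,T)$ — at this stage there is simply no room yet for any factorial gain.

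The main step is the induction on $m\ge0$, for fixed $l$, whose base case $m=0$ is the base-layer bound just obtained. Here one proves, for $n=l+[p]+m$, a bound of the shape $\|\beta_{s,\cdot}^{l,n}\|_{\gamma}\le K_{1}K_{2}^{\,m}/\left(\tfrac{m}{p}\right)!$ together with the companion local estimates $\|\eta_{s,t}^{l,n}\|\le K_{1}K_{2}^{\,m}\,\omega(s,t)^{([p]+1+m)/p}/\left(\tfrac{m}{p}\right)!$ and, for each tensor level $k=1,\dots,[p]$, a bound on $\|(\beta_{s,v}^{l,n}-\beta_{s,u}^{l,n})(g_{v},\cdot)\|_{k}$ of the same decay carrying the factor $\omega(u,v)^{(\gamma-k)/p}$; one also records the structural fact that $\beta_{s,s}^{l,n}=0$ once $n\ge l+[p]$. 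That vanishing holds because the tensor-level-$1$ part of $\beta_{s,t}^{l,n}$ is produced, through Definition~\ref{Definition of one-forms of iterated integrals}, only by the term $\beta_{t,t}^{n,n}(a,b)\,\eta_{s,t}^{l,n-1}$, which vanishes at $t=s$ since $\eta_{s,s}^{l,n-1}=0$, and $\mathcal{I}'$ lowers tensor degree by exactly $1$, so reaching the nonzero base layer $\beta_{s,s}^{l,l}$ through the recursion costs at least $[p]$ steps. In the inductive step one feeds the inductive hypothesis into the three-term recursion of Definition~\ref{Definition of one-forms of iterated integrals}: the first term contributes at most $C_{0}\|\eta_{s,t}^{l,n}\|$, while the second and third contribute at most $C\max_{k'<k}\|\beta_{s,t}^{l,n}(g_{t},\cdot)\|_{k'}$, because $\mathcal{I}'$ and the forest products $\sigma_{1}\ast\sigma_{2}$ force $\beta_{s,t}^{l,n}(g_{t},\cdot)$ to be evaluated on strictly lower tensor levels while the coefficient pieces $h$ and $\beta(h(y^{n+1},y^{n}))$ are absorbed by $C_{0}$; and for $\eta_{s,t}^{l,n+1}$ itself one has the one-step estimate $\beta_{s,s}^{l,n+1}(g_{s},g_{s,t})=0$ plus a sewing error bounded by $\|\beta_{s,\cdot}^{l,n+1}\|_{\gamma}\,\omega(s,t)^{\gamma/p}$ with $\gamma/p>1$.

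Summing this last estimate over a partition of $[s,t]$ is where the factorial decay of the signature of $g$ \cite{lyons1998differential} is invoked: the partition sum is of the type $\sum\prod\omega^{1/p}$ and is controlled by $(K\omega(s,t))^{m/p}/\left(\tfrac{m}{p}\right)!$, not merely by a geometric series, which is exactly what upgrades a naive exponential bound to a factorial one. The conclusion is then bookkeeping: take $t=T$, $\sup_{s}$, $\max_{k}$, bound $\omega(s,T)\le\omega(0,T)$, and absorb the fixed powers of $\omega(0,T)$ and the constant $K_{1}$ into a single $C$ (legitimate because $m\ge1$), yielding $\|\beta_{s,\cdot}^{l,n}\|_{\gamma}\le C^{\,n-[p]-l}/\left(\tfrac{n-[p]-l}{p}\right)!$ for $n\ge l+[p]+1$. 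Finally, to remove the smallness of $\omega(0,T)$ needed in the inductive step, one argues exactly as in the proof of Lemma~\ref{Lemma betan are uniformly controlled}: rescale so that $\|f\|_{\mathrm{Lip}(\gamma)}$ is small, split $[0,T]$ into finitely many overlapping subintervals on which the rescaled control is $\le1$, run the induction on each, and paste via the cocyclic identity; pasting $N(\omega(0,T))$ factorially-decaying sequences preserves factorial decay at the cost of enlarging $C$.

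\textbf{Main obstacle.} The delicate point is the inductive step: checking that each of the three terms of the recursion of Definition~\ref{Definition of one-forms of iterated integrals} costs only one extra power $\omega(s,t)^{1/p}$ (via the degree-lowering of $\mathcal{I}'$ and the uniform coefficient bounds), that the constants produced are genuinely independent of $n$ and $l$, and — above all — that the partition sums are summed by the neo-classical inequality so as to give the factorial $\left(\tfrac{m}{p}\right)!$ rather than a mere geometric factor; the pasting that frees the estimate from the smallness of $\omega(0,T)$ is the secondary technical hot spot.
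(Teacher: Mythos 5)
Your proposal assembles most of the right peripheral ingredients — the base-layer bound $\sup_{l}\max_{l\le n\le l+[p]}\Vert\beta_{s,\cdot}^{l,n}\Vert_{\gamma}\le K_{0}$ obtained by iterating Lemma \ref{Lemma controlled path is integrable} at degree $\gamma-1$, the vanishing $\beta_{t,t}^{j,n}\equiv 0$ for $n\ge [p]+j$, the factorial decay of the iterated integrals $\eta_{s,t}^{l,n}$ via a signature-type argument, and the rescaling/pasting for general $\omega(0,T)$ (the last of which the paper only needs in Lemma \ref{Lemma betan are uniformly controlled}, not here). But the core mechanism is missing. The paper's proof rests on the change-of-base-point identity
\begin{equation*}
\beta _{s,t}^{l,n}\left( g_{t},v\right) =\beta _{u,t}^{l,n}\left(
g_{t},v\right) +\tsum\nolimits_{j=l+1}^{n}\beta _{u,t}^{j,n}\left(
g_{t},v\right) \eta _{s,u}^{l,j-1}\text{,}
\end{equation*}
proved by induction alongside the Chen-type identity $\eta _{s,t}^{l,n}=\sum_{j}\eta _{u,t}^{j,n}\eta _{s,u}^{l,j-1}+\eta _{s,u}^{l,n}+\eta _{u,t}^{l,n}$. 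This identity is what lets one decompose the increment $(\beta_{s,t}^{l,n}-\beta_{s,u}^{l,n})(g_t,\cdot)$ into three groups: terms where the local $\omega(u,t)^{(\gamma-k)/p}$ regularity is supplied by a \emph{base-layer} one-form $\beta_{u,\cdot}^{j,n}$ with $n-j\le[p]$ (bounded by $K_0$) while the factorial decay is supplied by $\eta_{s,u}^{l,j-1}$; and a genuine convolution over the intermediate index $j$, $\sum_{j}(\text{decay in }n-j)(\text{decay in }j-l)$, which is where the neo-classical inequality is applied to the \emph{one-forms}. Your proposal never states this identity, and your substitute — pushing the bound through the defining recursion $\beta^{l,n}\mapsto\beta^{l,n+1}$ of Definition \ref{Definition of one-forms of iterated integrals} — does not close for the increment estimates that make up the bulk of the $\gamma$-norm.

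Concretely: in the recursion, the level-$k$ increment of $\beta^{l,n+1}_{s,\cdot}$ is fed (through $\mathcal{I}'$) by the level-$(k-1)$ increment of the integrand's one-form, which by Leibniz contains $\beta(h(y^{n+1},y^{n}))_{u}\bigl(\eta_{s,t}^{l,n}-\eta_{s,u}^{l,n}\bigr)$. Expanding $\eta_{s,t}^{l,n}-\eta_{s,u}^{l,n}$ by the Chen identity produces the term $\eta_{u,t}^{n,n}\eta_{s,u}^{l,n-1}$, which carries only $\omega(u,t)^{1/p}$ of local regularity in $(u,t)$, whereas the target requires $\omega(u,t)^{(\gamma-k+1)/p}$ with $\gamma-k+1$ as large as $\gamma>p$; so for $p\ge 2$ the two-variable structure (global factorial factor in $\omega(s,t)$ times local H\"older factor in $\omega(u,t)$) is destroyed. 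Moreover, any term in which $\Vert\beta_{s,\cdot}^{l,n}\Vert_{\gamma}$ enters at the \emph{same} tensor level with a fixed multiplicative constant yields only a geometric recursion $a_{n+1}\le \varepsilon_n + Ca_n$, and since $\bigl(\tfrac{m+1}{p}\bigr)!/\bigl(\tfrac{m}{p}\bigr)!\to\infty$ no fixed $C$ closes a factorial induction step of that shape. The paper avoids both problems precisely by re-expanding around the base point $u$ rather than iterating the defining recursion in $n$, and your invocation of the neo-classical inequality only for "partition sums" controlling $\eta$ misses its second, equally essential, application to the sum over $j$ in the one-form decomposition. You should add the identity above (and its inductive proof) and reorganize the inductive step around it.
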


\begin{proof}
The constants in this proof may depend on $p$, $\gamma $, $\left\Vert
f\right\Vert _{\limfunc{Lip}\left( \gamma \right) }$ and $\omega \left(
0,T\right) $.

Firstly, we prove that, for integers $n\geq l\geq 0$ and $0\leq s\leq u\leq
t\leq T$, 
\begin{equation}
\beta _{s,t}^{l,n}\left( g_{t},v\right) =\beta _{u,t}^{l,n}\left(
g_{t},v\right) +\tsum\nolimits_{j=l+1}^{n}\beta _{u,t}^{j,n}\left(
g_{t},v\right) \eta _{s,u}^{l,j-1}\text{, }\forall v\in \mathcal{V}\oplus
\cdots \oplus \mathcal{V}^{\otimes \left[ p\right] }\text{.}
\label{expression change of time}
\end{equation}%
The equality holds when $n=l$ based on the definition of $\beta _{s,t}^{l,l}$%
. Suppose it holds when $n-l\leq s$. Then by combining the definition of $%
\beta ^{l,n+1}$ in Definition \ref{Definition of one-forms of iterated
integrals} with the inductive hypothesis $\left( \ref{expression change of
time}\right) $ and by using $\eta _{s,t}^{l,n}=\sum_{j=l+1}^{n}\eta
_{u,t}^{j,n}\eta _{s,u}^{l,j-1}+\eta _{s,u}^{l,n}+\eta _{u,t}^{l,n}$, it can
be proved that $\left( \ref{expression change of time}\right) $ holds when $%
n-l=s+1$.

Without loss of generality we assume $\gamma \in (p,\left[ p\right] +1]$.
Based on Lemma \ref{Lemma betan are uniformly controlled}, $\sup_{n\geq
0}\left\Vert \zeta ^{n}\right\Vert _{\left[ p\right] +1}<\infty $. Then
since $\left\Vert h\right\Vert _{\limfunc{Lip}\left( \gamma -1\right) }\leq
C\left\Vert f\right\Vert _{\limfunc{Lip}\left( \gamma \right) }$, by using
Lemma \ref{Lemma controlled path is integrable}, it can be proved
inductively that, for some $K_{0}\geq 1$,%
\begin{equation}
\sup_{l\geq 0}\max_{n=l,\dots ,l+\left[ p\right] }\left\Vert \beta _{s,\cdot
}^{l,n}\right\Vert _{\gamma }\leq K_{0}\text{.}
\label{inner Lemma factorial decay 1}
\end{equation}%
Then combined with $\eta _{s,t}^{l,n}=\int_{s}^{t}\beta _{s,u}^{l,n}\left(
g_{u}\right) dg_{u}$, we have, for some constant $M_{0}>0$,%
\begin{equation*}
\left\Vert \eta _{s,t}^{l,n}\right\Vert \leq M_{0}\omega \left( s,t\right) ^{%
\frac{n-l+1}{p}}\text{, }\forall s<t\text{, }n-l+1=1,2,\dots ,\left[ p\right]
\text{, }\forall l\geq 0\text{.}
\end{equation*}%
Since%
\begin{equation*}
\eta _{s,t}^{l,n}=\tsum\nolimits_{j=l+1}^{n}\eta _{u,t}^{j,n}\eta
_{s,u}^{l,j-1}+\eta _{s,u}^{l,n}+\eta _{u,t}^{l,n}\text{, }\forall 0\leq
s<u<t\leq T\text{, }\forall n\geq l\geq 0\text{,}
\end{equation*}%
by following similar proof as the factorial decay of the signature of a
rough path as in Theorem 3.7 \cite{lyons2007differential}, we have that, for 
$\beta =3p$ and some constant $M\geq 1$, (we choose $\beta =3p$ to make the
induction work) 
\begin{equation}
\left\Vert \eta _{s,t}^{l,n}\right\Vert \leq \frac{M^{\frac{n-l+1}{p}}\omega
\left( s,t\right) ^{\frac{n-l+1}{p}}}{\beta \left( \frac{n-l+1}{p}\right) !}%
\text{, }\forall s<t\text{, }\forall n\geq l\geq 0\text{.}
\label{inner Lemma factorial decay 3}
\end{equation}

Then we prove by induction on $n-l$ that, for some constants $K\geq 1$ and $%
C\geq 1$ (we will chose them in the inductive step),%
\begin{equation}
\left\Vert \left( \beta _{s,t}^{l,n}-\beta _{s,u}^{l,n}\right) \left(
g_{t},\cdot \right) \right\Vert _{k}\leq K\frac{C^{\frac{n-\left[ p\right] -l%
}{p}}\omega \left( s,t\right) ^{\frac{n-\left[ p\right] -l}{p}}}{\beta
\left( \frac{n-\left[ p\right] -l}{p}\right) !}\omega \left( u,t\right) ^{%
\frac{\gamma -k}{p}}\text{, }\forall s<u<t\text{, }\forall n\geq l+\left[ p%
\right] \text{, }\forall l\geq 0\text{,}
\label{inner Lemma factorial decay inductive hypothesis}
\end{equation}%
which holds when $n-l=\left[ p\right] $ with $K=K_{0}\beta $ based on $%
\left( \ref{inner Lemma factorial decay 1}\right) $. Suppose $\left( \ref%
{inner Lemma factorial decay inductive hypothesis}\right) $ holds when $n-l=%
\left[ p\right] ,\dots ,s$ for some $s\geq \left[ p\right] $. Then when $%
n-l=s+1$ (so $n-l\geq \left[ p\right] +1$), for $s<u<t$, based on $\left( %
\ref{expression change of time}\right) $, we have, for any $v\in \mathcal{V}%
\oplus \cdots \oplus \mathcal{V}^{\otimes \left[ p\right] }$,%
\begin{eqnarray}
&&\left( \beta _{s,t}^{l,n}-\beta _{s,u}^{l,n}\right) \left( g_{t},v\right)
\label{inner Lemma factorial decay 10} \\
&=&\left( \beta _{u,t}^{l,n}-\beta _{u,u}^{l,n}\right) \left( g_{t},v\right)
+\tsum\nolimits_{j=n-\left[ p\right] }^{n}\left( \beta _{u,t}^{j,n}-\beta
_{u,u}^{j,n}\right) \left( g_{t},v\right) \eta
_{s,u}^{l,j-1}+\tsum\nolimits_{j=l+1}^{n-\left[ p\right] -1}\left( \beta
_{u,t}^{j,n}-\beta _{u,u}^{j,n}\right) \left( g_{t},v\right) \eta
_{s,u}^{l,j-1}  \notag \\
&=&:I\left( v\right) +I\!I\left( v\right) +I\!I\!I\left( v\right) \text{.} 
\notag
\end{eqnarray}%
For $I\left( v\right) $, by using $\left( \ref{expression change of time}%
\right) $, we have%
\begin{equation*}
\left( \beta _{u,t}^{l,n}-\beta _{u,u}^{l,n}\right) \left( g_{t},v\right)
=\left( \beta _{t,t}^{l,n}-\beta _{u,u}^{l,n}\right) \left( g_{t},v\right)
+\tsum\nolimits_{j=l+1}^{n}\beta _{t,t}^{j,n}\left( g_{t},v\right) \eta
_{u,t}^{l,j-1}=\tsum\nolimits_{j=n-\left[ p\right] +1}^{n}\beta
_{t,t}^{j,n}\left( g_{t},v\right) \eta _{u,t}^{l,j-1}\text{,}
\end{equation*}%
where we used $\beta _{t,t}^{j,n}\equiv 0$ for $n\geq \left[ p\right] +j$
which can be proved inductively based on the definition of $\beta
_{s,t}^{l,n}$ in Definition \ref{Definition of one-forms of iterated
integrals}.\ Hence, for $k=1,\dots ,\left[ p\right] $, by using that $%
\left\Vert \beta _{t,t}^{j,n}\left( g_{t},\cdot \right) \right\Vert _{k}=0$, 
$j\leq n-k$, and the factorial decay of $\eta _{s,t}^{l,n}$ in $\left( \ref%
{inner Lemma factorial decay 3}\right) $, we have, for some $C_{0}\geq 1$,
(since $\gamma \leq \left[ p\right] +1$)%
\begin{eqnarray}
\left\Vert I\left( \cdot \right) \right\Vert _{k} &=&\left\Vert
\tsum\nolimits_{j=n-\left[ p\right] +1}^{n}\beta _{t,t}^{j,n}\left(
g_{t},\cdot \right) \eta _{u,t}^{l,j-1}\right\Vert _{k}
\label{inner Lemma factorial decay 11} \\
&\leq &\sum_{j=n-k+1}^{n}\left\Vert \beta _{t,t}^{j,n}\left( g_{t},\cdot
\right) \right\Vert _{k}\frac{M^{\frac{j-l}{p}}\omega \left( u,t\right) ^{%
\frac{j-l}{p}}}{\beta \left( \frac{j-l}{p}\right) !}\leq K_{0}C_{0}\frac{M^{%
\frac{n-\left[ p\right] -l}{p}}\omega \left( u,t\right) ^{\frac{n-\left[ p%
\right] -l}{p}}}{\beta \left( \frac{n-\left[ p\right] -l}{p}\right) !}\omega
\left( u,t\right) ^{\frac{\gamma -k}{p}}\text{.}  \notag
\end{eqnarray}%
For $I\!I\left( v\right) $, by using $\left( \ref{inner Lemma factorial
decay 1}\right) $ and $\left( \ref{inner Lemma factorial decay 3}\right) $,
we have%
\begin{align}
\left\Vert I\!I\left( \cdot \right) \right\Vert _{k}& =\left\Vert
\tsum\nolimits_{j=n-\left[ p\right] }^{n}\left( \beta _{u,t}^{j,n}-\beta
_{u,u}^{j,n}\right) \left( g_{t},\cdot \right) \eta
_{s,u}^{l,j-1}\right\Vert _{k}  \label{inner Lemma factorial decay 12} \\
& \leq \sum_{j=n-\left[ p\right] }^{n}\left\Vert \left( \beta
_{u,t}^{j,n}-\beta _{u,u}^{j,n}\right) \left( g_{t},\cdot \right)
\right\Vert _{k}\frac{M^{\frac{j-l}{p}}\omega \left( s,u\right) ^{\frac{j-l}{%
p}}}{\beta \left( \frac{j-l}{p}\right) !}\leq K_{0}C_{0}\frac{M^{\frac{n-%
\left[ p\right] -l}{p}}\omega \left( s,u\right) ^{\frac{n-\left[ p\right] -l%
}{p}}}{\beta \left( \frac{n-\left[ p\right] -l}{p}\right) !}\omega \left(
u,t\right) ^{\frac{\gamma -k}{p}}\text{.}  \notag
\end{align}%
For $I\!I\!I\left( v\right) $, since $\left[ p\right] <n-j\leq n-l-1=s$ when 
$j=l+1,\dots ,n-\left[ p\right] -1$, by using the inductive hypothesis $%
\left( \ref{inner Lemma factorial decay inductive hypothesis}\right) $ and
neo-classical inequality \cite{lyons2007differential, hara2010fractional},
we have 
\begin{eqnarray}
\left\Vert I\!I\!I\left( \cdot \right) \right\Vert _{k} &=&\left\Vert
\tsum\nolimits_{j=l+1}^{n-\left[ p\right] -1}\left( \beta _{u,t}^{j,n}-\beta
_{u,u}^{j,n}\right) \left( g_{t},\cdot \right) \eta
_{s,u}^{l,j-1}\right\Vert _{k}  \label{inner Lemma factorial decay 13} \\
&\leq &\sum_{j=l+1}^{n-\left[ p\right] -1}K\frac{C^{\frac{n-\left[ p\right]
-j}{p}}\omega \left( u,t\right) ^{\frac{n-\left[ p\right] -j}{p}}}{\beta
\left( \frac{n-\left[ p\right] -j}{p}\right) !}\frac{M^{\frac{j-l}{p}}\omega
\left( s,u\right) ^{\frac{j-l}{p}}}{\beta \left( \frac{j-l}{p}\right) !}%
\omega \left( u,t\right) ^{\frac{\gamma -k}{p}}  \notag \\
&\leq &K\frac{p}{\beta }\frac{\left( C\vee M\right) ^{\frac{n-\left[ p\right]
-l}{p}}\omega \left( s,t\right) ^{\frac{n-\left[ p\right] -l}{p}}}{\beta
\left( \frac{n-\left[ p\right] -l}{p}\right) !}\omega \left( u,t\right) ^{%
\frac{\gamma -k}{p}}\text{.}  \notag
\end{eqnarray}%
Hence, based on $\left( \ref{inner Lemma factorial decay 10}\right) $, $%
\left( \ref{inner Lemma factorial decay 11}\right) $, $\left( \ref{inner
Lemma factorial decay 12}\right) $ and $\left( \ref{inner Lemma factorial
decay 13}\right) $, since $n-\left[ p\right] -l\geq 1$ and $\beta =3p$, by
choosing $K=K_{0}\left( C_{0}\vee \beta \right) $ ($\beta $ to take into
account of $n=l+\left[ p\right] $) and $C=3M$, we have $\left( \ref{inner
Lemma factorial decay inductive hypothesis}\right) $ holds when $n-l=s+1$,
and the induction is complete.

On the other hand, when $n-l\geq \left[ p\right] $, based on $\left( \ref%
{expression change of time}\right) $ and by using $\beta _{t,t}^{j,n}\equiv
0 $ for $n\geq \left[ p\right] +j$, we have%
\begin{equation*}
\beta _{s,t}^{l,n}\left( g_{t},\cdot \right) =\beta _{t,t}^{l,n}\left(
g_{t},\cdot \right) +\tsum\nolimits_{j=l+1}^{n}\beta _{t,t}^{j,n}\left(
g_{t},\cdot \right) \eta _{s,t}^{l,j-1}=\tsum\nolimits_{j=n-\left[ p\right]
+1}^{n}\beta _{t,t}^{j,n}\left( g_{t},\cdot \right) \eta _{s,t}^{l,j-1}\text{%
.}
\end{equation*}%
Hence, by using the factorial decay of $\eta _{s,t}^{l,n}$ in $\left( \ref%
{inner Lemma factorial decay 3}\right) $, we have%
\begin{equation}
\left\Vert \beta _{s,t}^{l,n}\left( g_{t},\cdot \right) \right\Vert \leq
\sum_{j=n-\left[ p\right] +1}^{n}\left\Vert \beta _{t,t}^{j,n}\left(
g_{t},\cdot \right) \right\Vert _{k}\left\Vert \eta
_{s,t}^{l,j-1}\right\Vert \leq K_{0}C_{0}\frac{M^{\frac{n-\left[ p\right] -l%
}{p}}\omega \left( s,t\right) ^{\frac{n-\left[ p\right] -l}{p}}}{\left( 
\frac{n-\left[ p\right] -l}{p}\right) !}\text{.}
\label{inner Lemma factorial decay uniform bound}
\end{equation}

Then for $s\in \lbrack 0,T)$ since 
\begin{equation*}
\left\Vert \beta _{s,\cdot }^{l,n}\right\Vert _{\gamma }:=\sup_{s\leq t\leq
T}\left\Vert \beta _{s,t}^{l,n}\left( g_{t},\cdot \right) \right\Vert
+\max_{k=1,\dots ,\left[ p\right] }\sup_{s\leq u\leq t\leq T}\omega \left(
u,t\right) ^{-\left( \gamma -\frac{k}{p}\right) }\left\Vert \left( \beta
_{s,t}^{l,n}-\beta _{s,u}^{l,n}\right) \left( g_{t},\cdot \right)
\right\Vert _{k}\text{,}
\end{equation*}%
we have the lemma holds based on $\left( \ref{inner Lemma factorial decay
inductive hypothesis}\right) $ and $\left( \ref{inner Lemma factorial decay
uniform bound}\right) $.
\end{proof}

\begin{theorem}[Existence, Uniqueness and Continuity of the Solution]
\label{Theorem existence uniqueness stability}For $\left[ p\right] +1\geq
\gamma >p\geq 1$, suppose $g\in C^{p-var}\left( \left[ 0,T\right] ,\mathcal{G%
}_{\left[ p\right] }\right) $ is controlled by $\omega $, $f:\mathcal{U}%
\rightarrow L\left( \mathcal{V},\mathcal{U}\right) $ is a $\limfunc{Lip}%
\left( \gamma \right) $ function and $\xi \in \mathcal{U}$. Then the Picard
iterations $\left\{ y^{n}\right\} _{n=0}^{\infty }$ in Definition \ref%
{Definition of yn} converge uniformly on $\left[ 0,T\right] $ to the unique
solution to the rough differential equation%
\begin{equation*}
dy=f\left( y\right) dx\text{, }y_{0}=\xi \text{,}
\end{equation*}%
and the solution is continuous with respect to $g$ in $p$-variation norm.
Moreover, there exist integrable one-forms $\beta ^{n}:\left[ 0,T\right]
\rightarrow B\left( \mathcal{G}_{\left[ p\right] },\mathcal{U}\right) $, $%
n\geq 0$, and a constant $C=C(p,\gamma ,\left\Vert f\right\Vert _{\limfunc{%
Lip}\left( \gamma \right) },\omega \left( 0,T\right) )>0$\ such that 
\begin{gather}
y_{t}^{n}=\xi +\int_{0}^{t}\beta _{u}^{n}\left( g_{u}\right) dg_{u}\text{, }%
\forall t\in \left[ 0,T\right] \text{, }  \notag \\
\text{and }\left\Vert \beta ^{n+1}-\beta ^{n}\right\Vert _{\gamma }\leq 
\frac{C^{n-\left[ p\right] }}{\left( \frac{n-\left[ p\right] }{p}\right) !}%
\text{, }\forall n\geq \left[ p\right] +1\text{.}  \label{factorial decay}
\end{gather}
\end{theorem}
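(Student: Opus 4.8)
The plan is to let Lemma~\ref{Lemma betan is a Cauchy sequence} carry all the analytic weight; the statement then follows by summing a factorially decaying series, passing to limits, and checking the limit has the required properties. \emph{Step 1: the one-forms $\beta^n$ and the factorial estimate.} I would take $\beta^0\equiv 0$ (the one-form of the constant path $y^0\equiv\xi$) and, for $n\ge 1$, set $\beta^n:=\sum_{m=0}^{n-1}\beta_{0,\cdot}^{0,m}$, where $\beta_{0,\cdot}^{0,m}$ is the one-form associated with $z^{m+1}=y^{m+1}-y^m$ in Definition~\ref{Definition of one-forms of iterated integrals}; comparing Definition~\ref{Definition of betan} with Definition~\ref{Definition of one-forms of iterated integrals} one checks that $\zeta^{m+1}-\zeta^m=\beta_{0,\cdot}^{0,m}$, so in fact $\beta^n=\zeta^n$. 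Each $\beta_{0,\cdot}^{0,m}$ has finite operator norm $\left\Vert\cdot\right\Vert_\gamma$ --- by $\left(\ref{inner Lemma factorial decay 1}\right)$ for $m\le[p]$ and by Lemma~\ref{Lemma betan is a Cauchy sequence} for $m>[p]$ --- hence is integrable against $g$ since $\gamma>p$, and as the integral $\int_0^\cdot\beta(g)\,dg$ is linear in $\beta$ for $\mathcal{U}$-valued one-forms, $\int_0^t\beta^n_u(g_u)\,dg_u=\sum_{m=0}^{n-1}z^{m+1}_t=y^n_t-\xi$. Then $\beta^{n+1}-\beta^n=\beta_{0,\cdot}^{0,n}$, and $\left(\ref{expression factorial decay of one-forms}\right)$ with $l=0$ is exactly the bound $\left\Vert\beta^{n+1}-\beta^n\right\Vert_\gamma\le \frac{C^{n-[p]}}{\left(\frac{n-[p]}{p}\right)!}$ for $n\ge[p]+1$, which settles the ``Moreover'' part.

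\emph{Step 2: convergence and the limiting equation.} Since $\sum_n \frac{C^{n-[p]}}{\left(\frac{n-[p]}{p}\right)!}<\infty$, the sequence $(\beta^n)_n$ will be Cauchy in $\left\Vert\cdot\right\Vert_\gamma$; because $\gamma>p$, a one-form of finite $\left\Vert\cdot\right\Vert_\gamma$ is integrable and $t\mapsto\int_0^t\beta(g_u)\,dg_u$ is a dominated path depending continuously and linearly on $\beta$, so there will be $\beta^\infty$ with $\beta^n\to\beta^\infty$ and $y^n\to y:=\xi+\int_0^\cdot\beta^\infty(g)\,dg$ uniformly on $\left[0,T\right]$, with $y$ dominated by $g$. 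To see $y$ solves the equation I would note, on one hand, $\int_0^\cdot f(y^n_u)\,dx_u=y^{n+1}_\cdot-\xi\to y_\cdot-\xi$, and on the other, that $y^n\to y$ as dominated paths, so that the stability of the constructions of Definitions~\ref{Definition one-form of f(rho)}--\ref{Definition of integral} under composition with $\limfunc{Lip}(\gamma)$ functions (as in \cite{lyons2014integration}) together with the continuity of the rough integral give $\int_0^\cdot f(y^n_u)\,dx_u\to\int_0^\cdot f(y_u)\,dx_u$; comparing the two limits yields $y_\cdot=\xi+\int_0^\cdot f(y_u)\,dx_u$, so $y$ is a solution in the sense of Definition~\ref{Definition of solution}.

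\emph{Step 3: uniqueness and continuity.} For uniqueness I would take two solutions $y,\tilde y$ dominated by $g$, put $w:=y-\tilde y$, note $w_0=0$, and use the Division Property (Lemma~\ref{Lemma division property}) to get $w_t=\int_0^t h(y_u,\tilde y_u)\,w_u\,dx_u$, so that $w$ is a fixed point of $v\mapsto\int_0^\cdot h(y_u,\tilde y_u)v_u\,dx_u$; iterating $n$ times writes $w_t$ as an $n$-fold iterated integral (against $dx$) of the uniformly $\limfunc{Lip}(\gamma-1)$-bounded coefficients $h(y_\cdot,\tilde y_\cdot)$ started from $w$, and rerunning the argument of Lemma~\ref{Lemma betan is a Cauchy sequence} with $f(\xi)(x_t-x_s)$ replaced by the (finite-norm) one-form of $w$ forces the one-form of this $n$-fold iterate to decay factorially in $n$; since it is a one-form of $w$ for every $n$, it must vanish, hence $w\equiv 0$. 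For continuity I would fix a control $\omega$ and restrict to $\{g:\left\Vert g\right\Vert_{p-var,\left[s,t\right]}^p\le\omega(s,t)\}$: the constants in Lemmas~\ref{Lemma betan are uniformly controlled} and~\ref{Lemma betan is a Cauchy sequence} depend on $g$ only through $\omega(0,T)$, so $y^n\to y$ uniformly over this set, while for each fixed $n$ the map $g\mapsto y^n$ is a finite composition of $\limfunc{Lip}(\gamma)$ maps and rough integrations, hence continuous in $p$-variation; an $\varepsilon/3$ argument on $\left\Vert y_g-y_{g'}\right\Vert\le\left\Vert y_g-y^n_g\right\Vert+\left\Vert y^n_g-y^n_{g'}\right\Vert+\left\Vert y^n_{g'}-y_{g'}\right\Vert$ then gives continuity of $g\mapsto y$.

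I expect the genuine obstacle to be entirely contained in Lemma~\ref{Lemma betan is a Cauchy sequence} (and, underneath it, the factorial decay of the signature and the Division Property), so that once that is in hand only the bookkeeping above remains. Within that bookkeeping the two points that will need care are: in Step~1, checking that the $\beta^n$ built from the $\beta_{0,\cdot}^{0,m}$ are genuine one-forms for the $y^n$ with $\beta^{n+1}-\beta^n$ exactly the one-form of $z^{n+1}$ (which rests on the linearity of the Banach-valued integral in its one-form and on matching Definition~\ref{Definition of betan} with Definition~\ref{Definition of one-forms of iterated integrals}); and, in Step~3, keeping the rate of convergence of the Picard iterations uniform over driving paths that share a common control.
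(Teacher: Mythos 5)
Your proposal is correct and follows essentially the same route as the paper: you take $\beta^n:=\sum_{m=0}^{n-1}\beta_{0,\cdot}^{0,m}$ with $\beta^{n+1}-\beta^n=\beta_{0,\cdot}^{0,n}$ so that Lemma~\ref{Lemma betan is a Cauchy sequence} gives $\left(\ref{factorial decay}\right)$ and convergence of the one-forms in operator norm, pass to the limit in $y^{n+1}=\xi+\int_0^{\cdot}f\left(y^n_u\right)dx_u$ to identify the fixed point, prove uniqueness by iterating the Division Property and rerunning the factorial-decay argument, and obtain continuity from the uniformity of the convergence over drivers sharing a common control --- exactly as in the paper's proof. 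The only cosmetic difference is your identification $\beta^n=\zeta^n$, which the paper neither asserts nor needs, since the theorem only requires \emph{some} integrable one-forms representing the $y^n$.
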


There are some remarks.

\begin{enumerate}
\item In proving the convergence of the Picard iterations, we proved the
convergence in operator norm of their associated one-forms. In particular,
we proved that the one-form associated with the difference between the $n$th
and $\left( n+1\right) $th Picard iterations decays factorially on $\left[
0,T\right] $ as $n$ tends to infinity.

\item Let $\rho :\left[ 0,T\right] \rightarrow \mathcal{W}$ be a path
dominated by $g$. Then the integral of $\rho $ against $y^{n}$ is well
defined:%
\begin{equation*}
\tint\nolimits_{0}^{t}\rho _{u}\otimes dy_{u}^{n}=\tint\nolimits_{0}^{t}\rho
_{u}\otimes f\left( y_{u}^{n-1}\right) dx_{u},\forall t\in \left[ 0,T\right]
.
\end{equation*}%
In particular, since $y^{n}$ is a dominated path, there exists a canonical
enhancement of $y^{n}$ to a group-valued path, which could take values in
nilpotent Lie group or Butcher group.

\item When treated as a Banach space-valued path, the group-valued
enhancement is again a dominated path. Since the one-form associated with
the enhancement is continuous with respect to the one-form associated with
the base dominated path, the one-forms of the enhancement of $y^{n}$ also
converge in operator norm, which implies the uniform convergence of the
group-valued enhancements.

\item When $f$ is $\limfunc{Lip}\left( \gamma \right) $ for $\gamma >p-1$,
the one-forms associated with the Picard iterations are uniformly bounded.
When the dimension is finite, based on Arzel\`{a}-Ascoli theorem, there
exists a subsequence of the one-forms which converges, so the associated
paths (and their enhancements) converge to a solution.

\item When $f$ is locally Lipschitz and the dimension is finite, the
solution exists (uniquely) up to explosion. Indeed, by Whitney's extension
theorem, the restriction of $f$ to any compact set can be extended to a
global Lipschitz function without increasing its Lipschitz norm, so the
solution exists up to exit time of that compact set. For similar reason,
when $f$ is locally $\limfunc{Lip}\left( \gamma \right) $ for $\gamma >p$,
any two solutions must agree on any compact set, so the solution exists
uniquely up to explosion.
\end{enumerate}

\begin{proof}
Suppose $\left\{ y^{n}\right\} _{n}$ are the Picard iterations in Definition %
\ref{Definition of yn}. Since $z^{i+1}=y^{i+1}-y^{i}$ and $\beta ^{0,i}$ is
the integrable one-form associated with $z^{i+1}$, if we define $\beta ^{n}:%
\left[ 0,T\right] \rightarrow B\left( \mathcal{G}_{\left[ p\right] },%
\mathcal{U}\right) $, $n\geq 1$, by 
\begin{equation*}
\beta _{s}^{n}\left( a,b\right) =\tsum\nolimits_{i=0}^{n-1}\beta
_{s}^{0,i}\left( a,b\right) ,\forall a,b\in \mathcal{G}_{\left[ p\right]
},\forall s\in \left[ 0,T\right] \text{,}
\end{equation*}%
then $\beta ^{n}$ is integrable and 
\begin{equation*}
y_{t}^{n}=\xi +\tint\nolimits_{0}^{t}\beta _{u}^{n}\left( g_{u}\right) dg_{u}%
\text{, }\forall t\in \left[ 0,T\right] \text{, }\forall n\geq 1\text{.}
\end{equation*}%
(Since $y^{0}\equiv \xi $, we set $\beta ^{0}\equiv 0$ so $y_{\cdot
}^{0}=\xi +\int_{0}^{\cdot }\beta ^{0}\left( g\right) dg$.) Based on Lemma %
\ref{Lemma betan is a Cauchy sequence}, we have $\left( \ref{factorial decay}%
\right) $ holds and $\beta ^{n}$ converge in operator norm as $n$ tends to
infinity (denote the limit by $\beta $), so $y_{\cdot }^{n}=\xi
+\int_{0}^{\cdot }\beta ^{n}\left( g\right) dg$ converge uniformly to $%
y_{\cdot }:=\xi +\int_{0}^{\cdot }\beta \left( g\right) dg$. Moreover, by
using the division property of $f$ (i.e.\ $f\left( x\right) -f\left(
y\right) =h\left( x,y\right) \left( x-y\right) $ for all $x,y$ in $\mathcal{U%
}$ and $\left\Vert h\right\Vert _{\limfunc{Lip}\left( \gamma -1\right) }\leq
C\left\Vert f\right\Vert _{\limfunc{Lip}\left( \gamma \right) }$), we have%
\begin{eqnarray*}
y_{t}^{n+1}-y_{t}^{n} &=&z_{t}^{n+1}=\tint\nolimits_{0}^{t}h\left(
y_{u}^{n},y_{u}^{n-1}\right) z_{u}^{n}dx_{u} \\
&=&\tint\nolimits_{0}^{t}h\left( y_{u}^{n},y_{u}^{n-1}\right) \left(
y_{u}^{n}-y_{u}^{n-1}\right) dx_{u}=\tint\nolimits_{0}^{t}\left( f\left(
y_{u}^{n}\right) -f\left( y_{u}^{n-1}\right) \right) dx_{u}\text{, }\forall
t\in \left[ 0,T\right] \text{.}
\end{eqnarray*}%
Hence,%
\begin{equation*}
y_{t}^{n+1}=\xi +\tint\nolimits_{0}^{t}f\left( y_{u}^{n}\right) dx_{u}\text{%
, }\forall t\in \left[ 0,T\right] \text{, }\forall n\geq 0\text{, with }%
y^{0}\equiv \xi \text{.}
\end{equation*}%
Since both $y^{n}$ and $y^{n+1}$ are dominated paths and their associated
one-forms converge to $\beta $ as $n$ tends to infinity, by letting $%
n\rightarrow \infty $ on both sides, we have $\beta \ $is the fixed point of
the mapping $\beta \mapsto \hat{\beta}$ where $\hat{\beta}$ is the one-form
associated with the dominated path $t\mapsto \int_{0}^{t}f\left( y\right) dx$%
. Hence, $y$ is a dominated path satisfying the integral equation and $y$ is
a solution.

Then we prove that the solution is unique. Suppose \thinspace $\hat{y}$ is
another solution. By using the division property of $f$, we have%
\begin{equation*}
y_{t}-\hat{y}_{t}=\tint\nolimits_{0}^{t}\left( f\left( y_{u}\right) -f\left( 
\hat{y}_{u}\right) \right) dx_{u}=\tint\nolimits_{0}^{t}h\left( y_{u},\hat{y}%
_{u}\right) \left( y_{u}-\hat{y}_{u}\right) dx_{u}\text{,\ }\forall t\in %
\left[ 0,T\right] \text{.}
\end{equation*}%
By iterating this process, we have, for any integer $n\geq 1$,%
\begin{equation*}
y_{t}-\hat{y}_{t}=\tidotsint\nolimits_{0<u_{1}<\cdots <u_{n}<t}h\left(
y_{u_{n}},\hat{y}_{u_{n}}\right) \cdots h\left( y_{u_{1}},\hat{y}%
_{u_{1}}\right) \left( y_{u_{1}}-\hat{y}_{u_{1}}\right) dx_{u_{1}}\cdots
dx_{u_{n}}\text{,\ }\forall t\in \left[ 0,T\right] \text{.}
\end{equation*}%
Since $\left( y,\hat{y}\right) $ is a dominated path and $h$ is a $\limfunc{%
Lip}\left( \gamma -1\right) $ function, we can define based on Lemma \ref%
{Lemma controlled path is integrable} the dominated paths $\rho ^{n}:\left[
0,T\right] \rightarrow L\left( \mathcal{U},\mathcal{U}\right) $, $n\geq 1$,
recursively by%
\begin{equation*}
\rho _{t}^{n+1}=\tint\nolimits_{0}^{t}h\left( y_{u},\hat{y}_{u}\right) \rho
_{u}^{n}dx_{u}\text{ with }\rho _{t}^{1}=\tint\nolimits_{0}^{t}h\left( y_{u},%
\hat{y}_{u}\right) dx_{u}\text{,\ }\forall t\in \left[ 0,T\right] \text{,}
\end{equation*}%
and we have%
\begin{equation*}
y_{t}-\hat{y}_{t}=\tint\nolimits_{0}^{t}\rho _{u}^{n}\left( y_{u}-\hat{y}%
_{u}\right) dx_{u}\text{, }\forall t\in \left[ 0,T\right] \text{, }\forall
n\geq 1\text{.}
\end{equation*}%
Then by following similar proof to that of Lemma \ref{Lemma betan is a
Cauchy sequence}, the one-form associated with $\rho ^{n}$ decays
factorially. Since $y-\hat{y}$ is another dominated path, the one-form
associated with the dominated path $\tint\nolimits_{0}^{\cdot }\rho
_{u}^{n}\left( y_{u}-\hat{y}_{u}\right) dx_{u}$ also decays factorially,
which implies that $y=\hat{y}$.

It is clear that for any integer $n\geq 1$, the mapping $g\mapsto \beta ^{n}$
is continuous. Suppose $g^{m}\rightarrow g$ in $p$-variation norm, then by
uniform convergence of the mapping $\beta ^{n}\mapsto \beta $ with respect
to the $p$-variation of $g$ (based on Lemma \ref{Lemma betan is a Cauchy
sequence}), we have $g\mapsto \beta $ is continuous, which implies that the
mapping $g\mapsto y$ is continuous with respect to $g$ in $p$-variation norm.
\end{proof}

\begin{flushright}
The Oxford-Man Institute, University of Oxford
\end{flushright}

\bibliographystyle{abbrv}
\bibliography{acompat,roughpath}

\end{document}